\newcommand \fk[1]{{{\mathfrak #1}}}
\newcommand \C[1]{{\mathcal #1}}
\newcommand \wti[1]{{\widetilde {#1}}}
\newcommand\fg{\mathfrak g}
\newcommand \bC{{\mathbb C}}
\newcommand \bH{{\mathbb H}}
\newcommand \bR{{\mathbb R}}
\newcommand \bZ{{\mathbb Z}}
\newcommand \bQ{{\mathbb Q}}
\newcommand\eg{{\it e.g.~ }}
\newcommand\ep{{\epsilon}}
\newcommand\om{{\omega}}
\newcommand\al{{\alpha}}
\newcommand\fh{{\mathfrak h}}
\newtheorem{theorem}{Theorem}[subsection]
\newtheorem{conjecture}{Conjecture}[subsection]
\newtheorem{corollary}{Corollary}[subsection]
\newtheorem{lemma}{Lemma}[subsection]
\newtheorem{proposition}{Proposition}[subsection]
\newtheorem{definition}{Definition}[subsection]
\newtheorem{example}{Example}[subsection]
\newcommand\Hom{\operatorname{Hom}}
\newcommand\Ima{\operatorname{Im}}
\newcommand\re{\operatorname{Re}}
\newcommand\triv{\mathsf{triv}}
\newcommand\sgn{\mathsf{sign}}
\newcommand\St{\mathsf{St}}
\newlength{\tabwidth}
\newlength{\tabheight}
\newlength{\tabrule}
\newlength{\tabwidthx}
\newlength{\tabheightx}
\def\gentabbox#1#2#3#4{\vbox to \tabheight{\setlength{\tabrule}{#3}%
  \setlength{\tabwidthx}{#1\tabwidth}\addtolength{\tabwidthx}{\tabrule}%

\setlength{\tabheightx}{#2\tabheight}\addtolength{\tabheightx}{-\tabheight}%
  \hbox to #1\tabwidth{%
    \hspace{-0.5\tabrule}\rule{\tabrule}{#2\tabheight}\hspace{-\tabrule}%
    \vbox to #2\tabheight{\hsize=\tabwidthx%
      \vspace{-0.5\tabrule}\hrule width\tabwidthx height\tabrule%
      \vspace{-0.5\tabrule}\vfil%
      \hbox to \tabwidthx{\hss#4\hss}%
        \vfil\vspace{-0.5\tabrule}%
      \hrule width\tabwidthx height\tabrule\vspace{-0.5\tabrule}}%
    \hspace{-\tabrule}\rule{\tabrule}{#2\tabheight}\hspace{-0.5\tabrule}}%
  \vspace{-\tabheightx}}}
\def\genblankbox#1#2{\vbox to \tabheight{\vfil\hbox to
#1\tabwidth{\hfil}}}
\def\tabbox#1#2#3{\gentabbox{#1}{#2}{0.4pt}{\strut #3}}
\newenvironment{tableau}{\bgroup\catcode`\:=13 \catcode`\.=13
  \catcode`\;=13 \catcode`\>=13 \catcode`\^=13
  \setlength{\tabheight}{3ex}\setlength{\tabwidth}{3ex}%
  \def\b##1##2##3{\gentabbox{##1}{##2}{1.2pt}{\vbox{##3}}}%
  \def\n##1##2##3{\gentabbox{##1}{##2}{0.4pt}{\vbox{##3}}}%
  \vbox\bgroup\offinterlineskip}{\egroup\egroup}
\numberwithin{equation}{subsection}
\begin{document}

\title[Unitary Hecke modules with nonzero Dirac
cohomology]{Unitary Hecke algebra modules with nonzero Dirac
  cohomology}

\author{Dan Barbasch}
       \address[D. Barbasch]{Dept. of Mathematics\\
               Cornell University\\Ithaca, NY 14850}
       \email{barbasch@math.cornell.edu}
\thanks{\noindent The first author was partially supported by NSF grants DMS-0967386, DMS-0901104 and an NSA-AMS grant. The second author was partially supported by NSF DMS-0968065 and NSA-AMS 081022}
\author{Dan Ciubotaru}
        \address[D. Ciubotaru]{Dept. of Mathematics\\ University of
          Utah\\ Salt Lake City, UT 84112}
        \email{ciubo@math.utah.edu}


\begin{abstract}In this paper, we review the construction of the Dirac
  operator for graded affine Hecke algebras and calculate the Dirac
  cohomology of irreducible unitary modules for the graded Hecke algebra of $gl(n)$.
\end{abstract}

\dedicatory{To Nolan Wallach with admiration}

\maketitle

\section{Introduction}\label{sec:1}
The Dirac operator plays an important role in the representation
theory of real reductive Lie groups. An account of the definition,
properties and some applications can be found in \cite{BW}. It is well
known, starting with the work of \cite{AS} and \cite{P}, that discrete
series occur in the kernel of the Dirac operator. Work of Enright and Wallach
\cite{EW} generalizes these results to other types of
representations. Other uses are to provide, via the \textit{Dirac
  inequality}, {introduced by Parthasarathy},  necessary
conditions for unitarity. One of the most 
striking applications is that for regular integral infinitesimal
character, the Dirac inequality gives precisely the unitary dual, and
determines the unitary representations with nontrivial $(\fk
g,K)-$cohomology.  

Given these properties, Vogan has introduced the notion of Dirac
cohomology; this was studied extensively in \cite{HP} and subsequent
work. One can argue that Dirac cohomology is a generalization of $(\fk
g,K)-$cohomology. While a representation has nontrivial  $(\fk
g,K)-$cohomology only if its infinitesimal character is regular
integral, the corresponding condition necessary for Dirac cohomology
to be nonzero is more general; certain representations with singular
and nonintegral infinitesimal character will also have nontrivial
Dirac cohomology. 
 
\bigskip
In this paper, we prove new results about an analogue of the Dirac
operator in the case of the graded affine Hecke algebra, introduced in \cite{BCT}. This operator can be thought of as the analogue of the Dirac operator in the case of a $p$-adic group. 
One of our results is to determine the behaviour of the Dirac
cohomology with respect to Harish-Chandra type induction. In the real
case, a  unitary representation has nontrivial $(\fk g
,K)-$cohomology, if and only if it is (essentially) obtained from the
trivial representation on a Levi component via the derived functor
construction. For unitary representations with
nontrivial Dirac cohomology the infinitesimal character can be nonintegral and singular. So conjecture instead,  that unitary representations with nontrivial Dirac cohomology  are
all cohomologically induced from unipotent (in the sense of \cite{A}) representations. To
investigate this conjecture we explore the Dirac cohomology of
unipotent representations for graded affine Hecke algebras. In particular, we compute part of the
cohomology of spherical unipotent representations for affine Hecke algebras of all types. In the case of type
$A$ we go further; we compute the cohomology of all unitary modules.  

\smallskip

This paper was written while we were guests of the Max Planck Institute in Bonn as part of the program \textit{Analysis on Lie groups}. We would like to thank the institute for its hospitality, and the organizers for making the program  possible, and providing the environment to do this research.

\section{Dirac cohomology for graded Hecke algebras}\label{sec:2}

In this section we review the construction and properties of the Dirac
operator from \cite{BCT} and the classification of spin projective Weyl group representations from \cite{C}.

\subsection{Root systems} We fix an $\bR$-root system
$\Phi=(V,R,V^\vee, R^\vee)$: $V, V^\vee$ are finite dimensional $\bR$-vector spaces, with a perfect bilinear pairing $(~,~): V\times V^\vee\to \bR$, so that $R\subset V\setminus\{0\},$  $R^\vee\subset V^\vee\setminus\{0\}$ are finite subsets in bijection
\begin{equation}
R\longleftrightarrow R^\vee,\ \al\longleftrightarrow\al^\vee,\ \text{such that }(\al,\al^\vee)=2.
\end{equation}
The reflections
\begin{equation}
s_\al: V\to V,\ s_\al(v)=v-(v,\al^\vee)\al, \quad s_\al:V^\vee\to V^\vee,\ s_\al(v')=v'-(\al,v')\al^\vee, \quad \al\in R,
\end{equation}
leave $R$ and $R^\vee$ invariant, respectively. Let $W$ be the subgroup of $GL(V)$ (respectively $GL(V^\vee)$) generated by $\{s_\al:~\al\in R\}$.

We will assume that the root system $\Phi$ is reduced and crystallographic.
We will fix a choice of simple roots $\Pi\subset R$, and consequently, positive roots $R^+$ and positive coroots $R^{\vee,+}.$ Often, we will write $\al>0$ or $\al<0$ in place of $\al\in R^+$ or $\al\in (-R^+)$, respectively. 

\smallskip

We fix a $W$-invariant inner product $\langle~,~\rangle$ on
$V$.  Denote also by $\langle~,~\rangle$ the dual inner product on  $V^\vee.$ If $v$ is a vector in $V$ or $V^\vee$, we denote $|v|:=\langle v,v\rangle^{1/2}.$

\subsection{The Clifford algebra} A classical reference for the Clifford algebra is \cite{Ch} (see also
section II.6 in \cite{BW}). Denote by $ C(V)$ the Clifford algebra defined by $V$
and the inner product $\langle~,~\rangle$.  More precisely, $
C(V)$ is the
quotient of the tensor algebra of $V$ by the ideal generated by
$$\om\otimes \om'+\om'\otimes \om+2\langle \om,\om'\rangle,\quad
\om,\om'\in V.$$
Equivalently,  $ C(V)$ is the associative algebra
with unit generated by $V$ with relations:
\begin{equation}
\om\om'+\om'\om=-2\langle\om,\om'\rangle.
\end{equation}
Let $\mathsf{O}(V)$ denote the group of orthogonal transformation of
$V$ with respect to $\langle~,~\rangle$. This acts by algebra
automorphisms on $ C(V)$, and the action of $-1\in
\mathsf{O}(V)$ induces a grading
\begin{equation}
 C(V)= C(V)_{\mathsf{even}}+  C(V)_{\mathsf{odd}}.
\end{equation}
Let $\ep$ be the automorphism of $ C(V)$ which is $+1$ on $
C(V)_{\mathsf{even}}$ and $-1$ on $ C(V)_{\mathsf{odd}}$.
Let ${}^t$ be the transpose {anti}automorphism of $ C(V)$
characterized by
\begin{equation}
\om^t=-\om,\ \om\in V,\quad (ab)^t=b^ta^t,\ a,b\in C(V).
\end{equation}
The Pin group  is
\begin{equation}\label{pin}
\mathsf{Pin}(V)=\{a\in  C(V):~ \ep(a) V a^{-1}\subset
V,~ a^t=a^{-1}\}.
\end{equation}
It sits in a short exact sequence
\begin{equation}\label{ses}
1\longrightarrow \bZ/2\bZ \longrightarrow
\mathsf{Pin}(V)\xrightarrow{\ \ p\ \ } \mathsf{O}(V)\longrightarrow 1,
\end{equation}
where the projection $p$ is given by $p(a)(\om)=\ep(a)\om a^{-1}$.

\medskip

If $\dim V$ is even, the Clifford algebra $C(V)$ has a unique (up to equivalence) complex simple module
$(\gamma, S)$ of dimension $2^{\dim
    V/2}$, endowed with a positive
definite Hermitian form $\langle ~,~\rangle_{ S}$ such that
\begin{equation}\label{eq:unitary}
\langle\gamma(a)s,s'\rangle_{ S}=\langle s,\gamma(a^t) s'\rangle_{ S},\quad\text{for all
  }a\in  C(V)\text{ and } s,s'\in  S. 
\end{equation}
When $\dim V$ is odd, there are two simple inequivalent {complex}
modules $(\gamma_+,S^+),$ $(\gamma_-,S^-)$ of dimension  $2^{[\dim
  V/2]}$. {Analogous to (\ref{eq:unitary}), these
  modules admit
an invariant positive definite Hermitian form.}
In order to simplify the formulation of the results, we will often
refer to any one of $S$, $S^+,$ $S^-$, as a spin module. 

Via (\ref{pin}), a spin module $S$ is an irreducible unitary
$\mathsf{Pin}(V)$ representation.

\subsection{The pin cover $\wti W$ of the Weyl group} The Weyl group $W$ acts by orthogonal
transformations on $V$, so one can embed $W$ as a subgroup of
$\mathsf{O}(V).$ We define the group $\wti W$ in
$\mathsf{Pin}(V)$: 
\begin{equation}
\wti W:=p^{-1}({W})\subset \mathsf{Pin}(V),\text{ where $p$
  is as in (\ref{ses}).}
\end{equation}
The group $\wti W$ has a Coxeter presentation similar to that of $W$.
Recall that as a
Coxeter group, $W$ has a presentation:
\begin{equation}
W=\langle s_{\al},~\al\in\Pi|\  (s_\al
  s_\beta)^{m(\al,\beta)}=1, ~\al,\beta\in\Pi\rangle,
\end{equation}
for certain positive integers $m(\al,\beta).$
Theorem 3.2 in \cite{Mo} exhibits $\wti W$ as 
\begin{equation}
\wti W=\langle z,\wti s_{\al},~\al\in\Pi|\  z^2=1,~(\wti s_\al
  \wti s_\beta)^{m(\al,\beta)}=z, ~\al,\beta\in\Pi\rangle.
\end{equation}

We call a representation $\wti\sigma$ of $\wti W$ genuine
(resp. non-genuine) if
$\wti\sigma(z)=-1$ (resp. $\wti\sigma(z)=1$). The non-genuine $\wti
W$-representations are the ones that factor through $W$.
We say that two genuine $\wti W$-types $\sigma_1,\sigma_2$ are associate if $\sigma_1\cong\sigma_2\otimes\sgn$.

Since $\wti W\subset\mathsf{Pin}(V)$, we can regard $S$ if $\dim V$ is even (resp. $S^\pm$ if $\dim V$ is odd) as unitary (genuine)
$\wti W$-representations. If $R$ spans $V$, they are irreducible representations (\cite[Theorem 3.3]{Mo}). When $\dim V$ is odd, $S^+$ and $S^-$ are associate, while if $\dim V$ is even, $S$ is self-associate.

\begin{definition}[{\cite[\S 3.4]{BCT}}] Define the Casimir element of $\wti W$:
\begin{equation}\label{omWtilde}
\Omega_{\wti W}=z\sum_{\substack{\al>0,\beta>0\\s_\al(\beta)<0}} 
 |\al^\vee| |\beta^\vee| ~\wti s_\al \wti s_\beta\in \mathbb C[\wti
 W]^{\wti W}.
\end{equation}
Every $\wti\sigma\in \widehat{\wti W}$ acts on
$\Omega_{\wti W}$ by a scalar, which we denote
$\wti\sigma(\Omega_{\wti W}).$
\end{definition}

Before stating Theorem \ref{t:intro}, we need to introduce more
notation. Assume that $R$ spans $V$ and let $\fg$ be the complex semisimple Lie algebra with root system $\Phi$ and Cartan subalgebra $\fh=V^\vee\otimes_\bR \bC$, and let
$G$ be the simply connected Lie group with Lie algebra $\fg$. Extend the inner product from $V^\vee$ to $\fh.$ Let us denote by $\C T(G)$ the set of
$G$-conjugacy classes of Jacobson-Morozov triples $(e,h,f)$ in $\fg$. We set:
\begin{equation}\label{eq:tzero}
\C T_0(G)=\{[(e,h,f)]\in \C T(G): \text{ the centralizer of }\{e,h,f\} \text{
  in }\fg\text{ is a toral subalgebra}\}.
\end{equation}
For every class in $\C T(G)$, we may (and will) choose a representative $(e,h,f)$ such that $h\in\fh.$
For every
nilpotent element $e$, let $A(e)$ denote the A-group in $G$, and let
$\widehat {A(e)}_0$ denote the set of representations of $A(e)$ of
Springer type. For every $\phi\in \widehat{A(e)}_0$, let
$\sigma_{(e,\phi)}$ be the associated Springer representation.
Normalize the Springer correspondence so that
$\sigma_{0,\text{triv}}=\sgn$.

\begin{theorem}[\cite{C}]\label{t:intro} There is a surjective map
\begin{equation}
\Psi:\widehat{\wti W}_{\mathsf{gen}} \longrightarrow \C T_0(G),
\end{equation}
with the following properties:
\begin{enumerate}
\item If $\Psi(\wti\sigma)=[(e,h,f)]$, then we have
\begin{equation}
\wti\sigma(\Omega_{\wti W})=\langle h,h\rangle,
\end{equation}
where $\Omega_{\wti W}$ is as in (\ref{omWtilde}).
\item   Let $(e,h,f)\in \C T_0(G)$ be given. For every  Springer representation $\sigma_{(e,\phi)}$,
  $\phi\in\widehat {A(e)}_0$, and every spin $\wti W$-module $S$,
  there exists $\wti \sigma\in \Psi^{-1}[(e,h,f)]$ such that $\wti\sigma$ appears with
  nonzero multiplicity in the tensor product
  $\sigma_{(e,\phi)}\otimes S$. Conversely, for every $\wti\sigma\in \Psi^{-1}[(e,h,f)]$, there exists a
  spin $\wti W$-module $S$ and a Springer representation
  $\sigma_{(e,\phi)}$, such that $\wti\sigma$ is contained in
  $\sigma_{(e,\phi)}\otimes S.$
\end{enumerate}
\end{theorem}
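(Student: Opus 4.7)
The plan is to build $\Psi$ from the tensor product of genuine $\wti W$-representations with the spin module, and to identify the Casimir eigenvalue of $\Omega_{\wti W}$ with $\langle h,h\rangle$ via a Parthasarathy-type identity combined with the classical formula for the $W$-Casimir on Springer representations. Concretely, fix a spin module $S$ (or both $S^\pm$ when $\dim V$ is odd). Since the product of two genuine $\wti W$-representations is non-genuine, the tensor $\wti\sigma\otimes S$ factors through $W$ and decomposes via the Springer correspondence as
\begin{equation*}
\wti\sigma\otimes S \;=\; \sum_{(e,\phi)} m_{(e,\phi)}(\wti\sigma)\,\sigma_{(e,\phi)}.
\end{equation*}
The candidate value $\Psi(\wti\sigma)$ is the $G$-class of the JM-triple attached to any $(e,\phi)$ with $m_{(e,\phi)}(\wti\sigma)\ne 0$. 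That this class is independent of the choice and lies in $\C T_0(G)$ is forced by the Casimir identity proved next.

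Then I would prove part (1) by computing $\Omega_{\wti W}$ on both sides of the decomposition. On the left, $\Omega_{\wti W}$ acts by $\wti\sigma(\Omega_{\wti W})$. On the right, the key step is a direct manipulation in $\bC[\wti W]$ rewriting $\Omega_{\wti W}$ in terms of a classical $W$-Casimir modified by an element that acts on $S$ by a constant (a Pin-level Parthasarathy identity, a descendent of the $D^2$-formula of \cite{BCT}). The Rossmann-Hotta-Kashiwara-Reeder formula identifies the eigenvalue of the $W$-Casimir on a Springer representation $\sigma_{(e,\phi)}$ with $\langle h,h\rangle$, independent of $\phi$; careful tracking of the central element $z$ in (\ref{omWtilde}) produces the correct sign, yielding $\wti\sigma(\Omega_{\wti W})=\langle h,h\rangle$ whenever $\wti\sigma$ is a constituent of some $\sigma_{(e,\phi)}\otimes S$.

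Property (2) then follows by reversing the tensor product. Given $[(e,h,f)]\in\C T_0(G)$ and $\phi\in\widehat{A(e)}_0$, decompose $\sigma_{(e,\phi)}\otimes S$ into genuine $\wti W$-irreducibles; every summand $\wti\sigma$ satisfies $\wti\sigma(\Omega_{\wti W})=\langle h,h\rangle$ by the Casimir identity, so lies in $\Psi^{-1}[(e,h,f)]$, and the decomposition is nonempty on dimension grounds. This gives surjectivity and the direct half of (2); the converse half uses that $S\otimes S^*$ contains the trivial representation of $W$ (up to associates), so that $\wti\sigma\otimes S$ must contain at least one Springer representation.

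The main obstacle, which is the technical heart of \cite{C}, is proving that $\Psi$ is single-valued and that its image lies in the set $\C T_0(G)$ of triples with toral centralizer, rather than in the larger $\C T(G)$. The Casimir scalar $\langle h,h\rangle$ is not a complete invariant of a nilpotent orbit, so the constraint from part (1) alone does not cut out a unique class. I expect to resolve this by combining the Casimir constraint with a minimality criterion on the orbit closure among Springer types appearing in $\wti\sigma\otimes S$, together with a rigidity statement asserting that spin modules tensored with Springer representations interact nontrivially only with distinguished nilpotent orbits. Establishing this rigidity -- essentially a Pin-group avatar of the Kostant-Vogan $\mathfrak u$-cohomology argument -- should proceed by a combination of the Bala-Carter classification of distinguished nilpotents and type-by-type verification using the explicit Springer correspondence.
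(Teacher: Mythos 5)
The paper itself does not prove Theorem \ref{t:intro}: it is imported wholesale from \cite{C}, so your argument has to stand on its own, and its linchpin is false. You assert a ``Casimir identity'': that \emph{every} genuine constituent $\wti\sigma$ of $\sigma_{(e,\phi)}\otimes S$ satisfies $\wti\sigma(\Omega_{\wti W})=\langle h,h\rangle$, and you use it three times -- to make your $\Psi$ (defined by choosing \emph{any} Springer constituent of $\wti\sigma\otimes S$) single-valued with image in $\C T_0(G)$, to prove (1), and to get the existence half of (2). But $\Omega_{\wti W}$ is just a central element of $\bC[\wti W]$; on the reducible diagonal $\wti W$-module $\sigma_{(e,\phi)}\otimes S$ it acts by \emph{different} scalars on different irreducible summands. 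Simplest counterexample: take $e=0$, so $\sigma_{(0,\triv)}=\sgn$ and $h=0$. The Steinberg module restricts to $W$ as $\sgn$, is unitary with central character $-\rho^\vee$, and has nonzero Dirac cohomology; by Theorem \ref{t:dirac}, every $\wti W$-constituent of that kernel inside $\sgn\otimes S$ has $\Omega_{\wti W}$-eigenvalue $4\langle\rho^\vee,\rho^\vee\rangle\neq 0=\langle h,h\rangle$. In type $A$ the failure is systematic: by Theorem \ref{glammu}, $\sigma_\mu\otimes\wti\sigma_{(n)}$ contains $\wti\sigma_\lambda$ for every $\lambda\in DP(n)$ with $g_{\lambda,\mu}\neq 0$ (typically several $\lambda\ge\mu$), and these have pairwise distinct eigenvalues $\langle h_\lambda,h_\lambda\rangle$. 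Indeed, if your identity held, the paper's Conjecture \ref{conj} and the discussion around (\ref{bomac})--(\ref{eqconj}) and Lemma \ref{prelimresults}(a) would be vacuous: their whole point is that the constituents of $\sigma_{(e,\phi)}\otimes S$ spread over fibers of $\Psi$ above various orbits, which is also why part (2) of the theorem only claims that \emph{some} constituent lies in $\Psi^{-1}[(e,h,f)]$. For the same reason your converse half of (2) is incomplete: knowing $S\otimes S^{*}$ contains the trivial $W$-type only shows $\wti\sigma\otimes S^{*}$ contains \emph{some} Springer representation, not one attached to the same triple $\Psi(\wti\sigma)$.

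This collapse cannot be patched by the fallback you sketch at the end. You concede that single-valuedness of $\Psi$ and the containment of its image in $\C T_0(G)$ are the technical heart, but what you offer is a plan, not an argument, and it contradicts your earlier claim that the Casimir identity ``forces'' these properties. It is also aimed at the wrong class of orbits: $\C T_0(G)$ consists of triples whose centralizer is \emph{toral}, a strictly larger class than the distinguished nilpotents your proposed rigidity statement concerns (in type $A$, $\C T_0$ corresponds to all of $DP(n)$, while the only distinguished orbit is the principal one). The actual content of Theorem \ref{t:intro} beyond formal Frobenius-reciprocity manipulations is precisely this selection rule -- which Springer constituent of $\wti\sigma\otimes S$ determines $\Psi(\wti\sigma)$, and why the resulting eigenvalue identity $\wti\sigma(\Omega_{\wti W})=\langle h,h\rangle$ holds -- and \cite{C} establishes it with substantial additional input, including explicit classification work in the various types. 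As written, your proposal leaves the theorem essentially unproved.
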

Since $\triv(\Omega_{\wti W})=\sgn(\Omega_{\wti W})$, 
Theorem \ref{t:intro}(1) says in particular that any two associate
genuine $\wti W$-types $\wti\sigma_1,\wti\sigma_2$ lie in the same fiber of $\Psi$.

\subsection{The graded Hecke algebra} 
Recall the real root system $\Phi=(V,R,V^\vee,R^\vee)$. The complexifications of $V,
V^\vee$ are denoted by $V_\bC,V_\bC^\vee$. We denote by $S(V_\bC)$ the symmetric
algebra in $V_\bC.$

\begin{definition}[\cite{L}]The graded affine Hecke algebra $\bH$ (with
  equal parameters) is defined as follows:
\begin{enumerate}
\item[(i)] as a $\bC$-vector space, it is $S(V_\bC)\otimes\bC[W]$;
\item[(ii)] $S(V_\bC)$ and $\bC[W]$ have the usual algebra structures as
  subalgebras;
\item[(iii)] the cross relations are
$$s_\al\cdot\xi-s_\al(\xi)\cdot s_\al=\langle\al,\xi\rangle,$$
for every $\al\in \Pi$ and $\xi\in V_\bC.$
\end{enumerate}
\end{definition}

\begin{definition}\label{d:casimir} Let $\{\om_i:i=1,n\}$ and $\{\om^i: i=1,n\}$ be dual bases of $V$ with respect to $\langle~,~\rangle$. 
Define the Casimir element of $\bH$:
$\Omega=\sum_{i=1}^n\omega_i\omega^i\in \bH$.
\end{definition}
It is easy to see that the element $\Omega$ is independent of the
choice of bases and central in $\bH$. Moreover, if $(\pi,X)$ is an irreducible
  $\bH$-module with central character $\chi_\nu$ for $\nu \in V_\bC^\vee$,
then $\pi$ acts on $\Omega$ by the scalar $\langle \nu,\nu\rangle.$

The algebra $\bH$ has a natural conjugate linear
anti-involution defined on generators as follows:
\begin{equation} \label{eq:tomdef}
\begin{aligned}
&w^*={w^{-1}},\quad w\in W,\\
&\omega^*=-\omega+\sum_{\beta>0} (\beta,\omega) {s_\beta},\quad
{\omega\in V}.
\end{aligned}
\end{equation}

An $\bH$-module $(\pi,X)$ 
is said to be Hermitian 
if there exists a Hermitian form $(~,~)_X$ on $X$ which is
invariant in the sense that
$(\pi(h)x,y)_X={{(x,\pi(h^*)y)}_X},$ for all $h\in\bH,$ $ x,y\in X$.
If such a form exists which is also positive definite, then $X$ is
said to be unitary.

For every $\omega\in V$, define
\begin{equation}\label{omtilde}
\wti\om=\om-\frac 12 \sum_{\beta>0} (\beta,\omega) {s_\beta}
\; \in \; \bH.
\end{equation}
It is immediate that $\wti\omega^* = - \wti\omega$.

\begin{definition}[\cite{BCT}]
\label{d:dirac}
Let  $\{\om_i\}$, $\{\om^i\}$ be dual bases of $V$.
The Dirac element is defined as
\[
\C D = \sum_i \wti \omega_i \otimes \omega^i \in \bH \otimes {C(V)}.
\]
It is elementary to verify that $\C D$ does not depend on the choice of
dual bases.

We will usually work with
a fixed spin module $(\gamma, S)$ for {$C(V)$} 
and a fixed $\bH$-module $(\pi,X)$.  Define
the Dirac operator for $X$ (and $S$)
as $D=(\pi \otimes \gamma)(\C D)$.
\end{definition}

Suppose $X$ is a Hermitian $\bH$-module with invariant form $(~,~)_X$.
Endow $X \otimes S$ with the
Hermitian form $(x\otimes s, x' \otimes s')_{X \otimes S} 
= (x,x')_X  \langle s,s'\rangle_S$.  {Analogous to results of
  Parthasarathy in the real case,} the operator $D$ is self adjoint with
respect to $(~,~)_{X \otimes S}$,
\begin{equation}
( D (x\otimes s), x'\otimes s')_{X\otimes S}=
(x\otimes s,D(x'\otimes s'))_{X\otimes S}
\end{equation}
Thus a Hermitian $\bH$-module 
is unitary only if
\begin{equation}
\label{eq:dcriterion}
(D^2 (x\otimes s), x\otimes s)_{X\otimes S} \geq 0, \qquad \text{ for all $x\otimes s \in X \otimes S$}.
\end{equation}
We write $\Delta_{\wti W}$ for the diagonal embedding of $\bC[\wti W]$ into
{$\bH \otimes C(V)$} defined by
extending
$\Delta_{\wti W}(\wti w) = {p(\wti w)} \otimes \wti w$
linearly.

For $\wti w\in \wti W$, one can easily see that
\begin{equation}\label{Winv}
\Delta_{\wti W}(\wti w) \C D
=\sgn(\wti w) \C D \Delta_{\wti W}(\wti w)
\end{equation}
as elements of {$\bH \otimes C(V)$}. In particular, the kernel of the Dirac operator on $X \otimes S$ is 
invariant under $\wti W$.

\begin{theorem}[\cite{BCT}]\label{t:dirac} The square of the Dirac
  element equals
\begin{equation}
\C D^2=-\Omega\otimes 1+\frac 14\Delta_{\wti W}(\Omega_{\wti W}),
\end{equation}
in $\bH \otimes C(V)$.
\end{theorem}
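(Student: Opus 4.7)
The approach is a direct computation in $\bH\otimes C(V)$. Since the identity is independent of the choice of dual bases, I would fix an orthonormal basis $\{\omega_i\}$ of $V$ so that $\omega^i = \omega_i$. Expanding
\[
\C D^2 = \sum_{i,j} \wti\omega_i\wti\omega_j \otimes \omega_i\omega_j
\]
and using the Clifford relations $\omega_i\omega_j + \omega_j\omega_i = -2\delta_{ij}$ to separate the symmetric and antisymmetric parts of the Clifford factor yields the decomposition
\[
\C D^2 = -\sum_i \wti\omega_i^{\,2}\otimes 1 \;+\; \sum_{i<j}[\wti\omega_i,\wti\omega_j]\otimes\omega_i\omega_j.
\]
This splits the proof into computing the ``scalar-in-Clifford'' diagonal piece and the bivector off-diagonal piece separately, then matching the combined result against $\tfrac14\Delta_{\wti W}(\Omega_{\wti W})$.

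For the diagonal piece, I would expand $\wti\omega_i = \omega_i - \tfrac12\sum_{\beta>0}(\beta,\omega_i)s_\beta$ and push the $\omega_i$'s past reflections using the cross-relation $s_\alpha\xi = s_\alpha(\xi)s_\alpha + \langle\alpha,\xi\rangle$. Combined with the orthonormal-basis identities $\sum_i(\beta,\omega_i)\omega_i = \beta$ and $\sum_i(\beta,\omega_i)(\gamma,\omega_i) = \langle\beta,\gamma\rangle$ and the elementary computation $\beta s_\beta + s_\beta\beta = |\beta|^2$, the polynomial contribution collects to $\sum_i\omega_i^2 = \Omega$ while the mixed anticommutator terms collapse to a scalar, leaving a residue of the form $\tfrac{1}{4}\sum_{\beta,\gamma>0}\langle\beta,\gamma\rangle s_\beta s_\gamma$. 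For the off-diagonal piece, I check that the contributions linear in $\omega$ inside the commutator $[\wti\omega_i,\wti\omega_j]$ cancel, using the symmetry $[\omega_i,T_j] = [\omega_j,T_i]$ (where $T_k = \sum_\beta\langle\beta,\omega_k\rangle s_\beta$); this leaves $[\wti\omega_i,\wti\omega_j] = \tfrac14[T_i,T_j]$, so that $\sum_{i<j}[\wti\omega_i,\wti\omega_j]\otimes\omega_i\omega_j = \tfrac{1}{8}\sum_{\beta,\gamma>0}(s_\beta s_\gamma - s_\gamma s_\beta)\otimes\beta\gamma$ after using $\sum_i\langle\beta,\omega_i\rangle\omega_i = \beta$ in the Clifford tensor factor.

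The main obstacle, and heart of the proof, lies in reassembling these pieces to match $\tfrac14\Delta_{\wti W}(\Omega_{\wti W})$. The key mechanism is the Clifford identity $\beta\gamma+\gamma\beta = -2\langle\beta,\gamma\rangle$: splitting $\beta\gamma$ into its scalar part $-\langle\beta,\gamma\rangle$ and its degree-two part inside the off-diagonal sum produces a scalar-Clifford contribution $\tfrac{1}{4}\sum_{\beta\neq\gamma}\langle\beta,\gamma\rangle s_\beta s_\gamma\otimes 1$ that cancels exactly the bilinear reflection residue from $\sum_i\wti\omega_i^{\,2}$. The remaining bivector contribution $\tfrac{1}{4}\sum_{\beta\neq\gamma}s_\beta s_\gamma \otimes \beta\gamma$ must then be reorganized into $\tfrac14\Delta_{\wti W}(\Omega_{\wti W})$. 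Here the constraint $s_\alpha(\beta)<0$ appearing in the definition of $\Omega_{\wti W}$ emerges from a sign-cancellation: under the involution $(\alpha,\beta)\leftrightarrow(\gamma,\alpha)$ with $\gamma = s_\alpha(\beta)$, one has both the Weyl-group identity $s_\alpha s_\beta = s_\gamma s_\alpha$ and the Clifford identity $\gamma\alpha = -\alpha\beta$, so the pair with $s_\alpha(\beta)>0$ annihilates against its image with $s_\gamma(\alpha)<0$, and only the ``bad'' pairs $s_\alpha(\beta)<0$ survive. Normalizing $f_\alpha = \alpha/|\alpha| = \tau(\wti s_\alpha)$ and using $|\alpha^\vee| = 2/|\alpha|$ produces the precise overall factor $\tfrac14$, while the sign $\Delta_{\wti W}(z) = 1\otimes(-1)$ accounts for the presence of $z$ in front of the sum defining $\Omega_{\wti W}$. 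The delicate step is verifying all signs and normalizations in this final reorganization.
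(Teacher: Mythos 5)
Your overall skeleton (orthonormal basis, the split $\C D^2=-\sum_i\wti\omega_i^2\otimes 1+\sum_{i<j}[\wti\omega_i,\wti\omega_j]\otimes\omega_i\omega_j$, and the final reorganization in which $f_\al=\al/|\al|$, $|\al^\vee|=2/|\al|$ and the pairing of $(\al,\beta)$ with $s_\al(\beta)>0$ against its image explain the factor $\tfrac14$, the $z$, and the constraint $s_\al(\beta)<0$) is the right shape, and it is essentially the shape of the computation in \cite{BCT}, which the present paper only cites. But the two computational identities your argument rests on are false, because you use the cross relation $s_\beta\xi-s_\beta(\xi)s_\beta=\langle\beta,\xi\rangle$ for \emph{all} positive roots $\beta$, whereas it holds only for simple ones: for a non-simple reflection, moving $\xi\in V$ past $s_\beta$ produces extra group-algebra terms supported on products of two reflections. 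Concretely, in $\bH_3$ (the $gl(3)$ conventions of Section 3) one has $s_{1,3}\ep_1=\ep_3 s_{1,3}+1+s_{1,2}s_{2,3}$, so for $\beta=\ep_1-\ep_3$ one gets $\beta s_\beta+s_\beta\beta=|\beta|^2+s_{1,2}s_{2,3}+s_{2,3}s_{1,2}$, not the scalar $|\beta|^2$; hence your mixed anticommutator terms do not collapse to a scalar. Likewise the symmetry $[\omega_i,T_j]=[\omega_j,T_i]$ fails (only the polynomial, degree-one parts of the two commutators agree): with $T_1=s_{1,2}+s_{1,3}$, $T_2=-s_{1,2}+s_{2,3}$ one finds $[\ep_1,T_2]-[\ep_2,T_1]=-(s_{1,2}s_{2,3}-s_{2,3}s_{1,2})\neq0$, and consequently $[\wti\ep_1,\wti\ep_2]=\tfrac14(s_{1,2}s_{2,3}-s_{2,3}s_{1,2})$ while $\tfrac14[T_1,T_2]=-\tfrac14(s_{1,2}s_{2,3}-s_{2,3}s_{1,2})$, so your claim $[\wti\omega_i,\wti\omega_j]=\tfrac14[T_i,T_j]$ is wrong.

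This is not a cosmetic bookkeeping issue: the terms you drop are exactly two-reflection elements of $\bC[W]$, i.e.\ they are part of the source of the $\wti s_\al\wti s_\beta$, $s_\al(\beta)<0$, terms in $\Omega_{\wti W}$. With your (incorrect) diagonal residue $\tfrac14\sum_{\beta,\gamma>0}\langle\beta,\gamma\rangle s_\beta s_\gamma$ and off-diagonal piece $\tfrac18\sum_{\beta\neq\gamma}(s_\beta s_\gamma-s_\gamma s_\beta)\otimes\beta\gamma$, the coefficients cannot assemble to $\tfrac14\Delta_{\wti W}(\Omega_{\wti W})$; moreover the scalar contributions you create (e.g.\ the $\tfrac12\sum_{\beta>0}|\beta|^2$ from the ``anticommutator $=$ scalar'' step and the $\beta=\gamma$ terms of $\sum_i T_i^2$) have nothing to cancel against, while the right-hand side of the theorem contains no scalar beyond $-\Omega\otimes 1$. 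To repair the argument you must use the correct commutation rule for an arbitrary $w\in W$, namely $w\,\xi=w(\xi)\,w+\sum_{\al>0,\ w(\al)<0}\langle\al,\xi\rangle\,w s_\al$ (suitably normalized, equal parameters), and then establish closed formulas for $\sum_i\wti\omega_i\wti\omega^i$ and for $[\wti\omega,\wti\omega']$ in which the extra two-reflection terms are tracked; these are precisely the lemmas isolated in \cite{BCT}, and only after they are in place does your final cancellation argument with $f_\al$ and the $s_\al(\beta)<0$ condition yield the stated identity.
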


\subsection{Dirac cohomology}
To have a uniform notation, we will denote a spin module by $S^\ep$. If $\dim V$ is even, then $S^\ep$ is $S$, the unique spin module, and if $\dim V$ is odd, then $\ep$ could be $+$ or $-$.
\begin{definition}
\label{d:dcoh}
In the setting of Definition \ref{d:dirac}, define
\begin{equation}
H^D_\ep(X):=\ker D\big / \left(\ker D\cap \Ima D\right )
\end{equation}
and call it the Dirac cohomology of $X$. (The symbol $\ep$ denotes the dependence on the spin module $S^\ep$.)
If $X$ is unitary, the self-adjointness of $D$ implies that
$\ker(D) \cap \Ima (D) = 0$, and so $H^D_\ep(X) = \ker (D)$.
\end{definition}

Vogan's conjecture takes the following form.

\begin{theorem}[{\cite[Theorem 4.8]{BCT}}]
\label{t:hpv}
Suppose $(\pi,X)$ is an $\bH$ module with central character $\chi_\nu$
with $\nu \in V_\bC^\vee$.
Suppose 
that $H^D_\ep(X) \neq 0$ and let $(\wti\sigma,\wti U)$ be an irreducible  
representation of $\wti W$ such that $\Hom_{\wti W}(\wti U, H^D_\ep(X) )
\neq 0$. If $\Psi(\wti\sigma)=[(e,h,f)]\in\C T_0(G)$, 
then $\nu=\frac 12h.$
\end{theorem}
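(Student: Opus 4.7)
The plan is to combine Theorem \ref{t:dirac} with a generalization that controls every central element of $\bH$, not just the Casimir $\Omega$. First, the easy step: because $\Hom_{\wti W}(\wti U, H^D_\ep(X))\neq 0$, pick a nonzero $v \in \ker D$ whose class $[v]\in H^D_\ep(X)$ generates a copy of $\wti\sigma$. From $D^2 v = 0$ together with Theorem \ref{t:dirac}, the operators $\Omega\otimes 1$ and $\tfrac14 \Delta_{\wti W}(\Omega_{\wti W})$ act identically on $v$. The left acts on $v$ by $\langle\nu,\nu\rangle$ via the central character, and the right acts by $\tfrac14\langle h,h\rangle$ by Theorem \ref{t:intro}(1) on the $\wti\sigma$-isotypic component. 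This yields the norm equality
$$\langle \nu,\nu\rangle = \tfrac{1}{4}\langle h,h\rangle.$$

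Norm equality alone does not pin down the $W$-orbit of $\nu$. The decisive additional ingredient is a \emph{Vogan identity} upgrading Theorem \ref{t:dirac} to the full center: one constructs an algebra homomorphism
$$\zeta: Z(\bH) = S(V_\bC)^W \longrightarrow \bC[\wti W]^{\wti W}$$
together with, for each $z \in Z(\bH)$, an odd-parity element $a_z \in \bH \otimes C(V)$ satisfying
$$z\otimes 1 - \Delta_{\wti W}(\zeta(z)) = \C D\, a_z + a_z\, \C D.$$
The quadratic case $z=\Omega$, $\zeta(\Omega)=\tfrac14 \Omega_{\wti W}$ is Theorem \ref{t:dirac}. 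I would build the general case by induction on the $S(V_\bC)$-filtration degree of $z$, exploiting the super-derivation $\mathrm{ad}\,\C D$ on the $\bZ/2$-graded algebra $\bH\otimes C(V)$ and the sign relation (\ref{Winv}), which ensures $\mathrm{ad}\,\C D$ interacts cleanly with the diagonal $\wti W$-action.

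Granted this identity, on $[v] \in H^D_\ep(X)$ we have $z\cdot[v] = \Delta_{\wti W}(\zeta(z))\cdot[v]$, because $\C D v = 0$ and $\C D a_z v \in \Ima D$ dies in cohomology. The left side equals $\chi_\nu(z)[v]$; the right side equals $\wti\sigma(\zeta(z))[v]$ from the $\wti\sigma$-isotypic condition. Hence $\chi_\nu = \wti\sigma \circ \zeta$ as characters of $Z(\bH)$. Since $W$-invariant polynomials separate $W$-orbits in $V_\bC^\vee$, the $W$-orbit of $\nu$ is determined. The final identification of this orbit as that of $h/2$ uses that $\zeta$ is constructed compatibly with $\Psi$: the normalization anchored by the quadratic case (Theorem \ref{t:intro}(1), with the factor of $1/4$ explaining the factor of $1/2$ between $\nu$ and $h$) and the parallel structure of higher $W$-invariants yields $\wti\sigma\circ\zeta = \operatorname{ev}_{h/2}$.

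The main obstacle is the construction of $\zeta$ and the inductive building of the elements $a_z$; this is the graded affine Hecke algebra analogue of the Huang--Pandzic resolution of Vogan's conjecture for real reductive groups. The core technical point is to verify, at each inductive degree, that the obstruction to lifting $\zeta$ is a supercoboundary for $\mathrm{ad}\,\C D$. This relies on the defining cross-relation $s_\al\xi - s_\al(\xi)s_\al = \langle\al,\xi\rangle$ of $\bH$, invariant theory of $\wti W$ acting on $C(V)$, and the fact that $\C D^2$ given by Theorem \ref{t:dirac} is essentially the only obstruction to commutation between $\bH\otimes 1$ and $\Delta_{\wti W}(\bC[\wti W])$ inside $\bH\otimes C(V)$.
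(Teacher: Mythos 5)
Your overall strategy is the same one used to prove this statement in \cite{BCT} (the theorem is only quoted in this paper): first the norm identity from Theorem \ref{t:dirac}, then a Huang--Pandzic style ``Vogan identity'' producing an algebra homomorphism $\zeta:Z(\bH)=S(V_\bC)^W\to \bC[\wti W]^{\wti W}$ with $z\otimes 1-\Delta_{\wti W}(\zeta(z))$ an anticommutator with $\C D$, whence $\chi_\nu=\wti\sigma\circ\zeta$ on $Z(\bH)$ and the $W$-orbit of $\nu$ is determined by $\wti\sigma$ alone. Up to that point your outline is sound (granting the genuinely nontrivial cohomological computation that makes the inductive construction of $\zeta$ and the elements $a_z$ work, which is the technical heart of \cite{BCT} and which you only sketch).

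The genuine gap is the last step, the identification $\wti\sigma\circ\zeta=\mathrm{ev}_{h/2}$ where $\Psi(\wti\sigma)=[(e,h,f)]$. You assert this follows from ``the normalization anchored by the quadratic case and the parallel structure of higher $W$-invariants,'' but that is not an argument: the quadratic case only fixes the value of $\wti\sigma\circ\zeta$ on the degree-two invariant, i.e.\ exactly the norm equality $\langle\nu,\nu\rangle=\langle h/2,h/2\rangle$ you already had, and the inductive construction of $\zeta$ gives no formula for its values on higher-degree invariants, nor any a priori compatibility with the map $\Psi$ of \cite{C}, which is defined independently of $\zeta$. The way this is actually closed is by a witness module: given $[(e,h,f)]=\Psi(\wti\sigma)$, Theorem \ref{t:intro}(2) provides a Springer representation $\sigma_{(e,\phi)}$ and a spin module with $\wti\sigma\subset\sigma_{(e,\phi)}\otimes S^\ep$; the tempered (unitary) module attached to $(e,h,f,\phi)$ has central character $\chi_{h/2}$ and contains $\sigma_{(e,\phi)}$ (in the spherical case $\phi=\triv$ this is $L(h/2)$, unitary by Proposition \ref{unitunip} and containing $\sigma_{(e,1)}$ by Proposition \ref{sphrest}), so the self-adjointness/Dirac-inequality argument of Proposition \ref{criterion} puts $\wti\sigma$ in its Dirac cohomology; applying your identity $\chi_{\mathrm{c.c.}}=\wti\sigma\circ\zeta$ to \emph{that} module computes $\wti\sigma(\zeta(z))=z(h/2)$ for all $z\in Z(\bH)$, and only then does $\chi_\nu=\wti\sigma\circ\zeta$ give $\nu=\frac12 h$ for an arbitrary $X$ as in the statement. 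Without this step (or an explicit computation of $\zeta$ beyond degree two, which is not available), your proposal proves only that the orbit of $\nu$ is a function of $\wti\sigma$, together with $|\nu|=|h|/2$ --- not the asserted equality $\nu=\frac12 h$.
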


As a corollary, we find the following formula for $H^D_\ep(X).$
\begin{corollary}
Assume $X$ is an $\bH$ module with central character $\chi_{\frac 12 h}$, for some $[(e,h,f)]\in\C T_0(G)$ (otherwise $H^D_\ep(X)=0$). Then, as a $\wti W$-module
\begin{equation}\label{HDdecomp}
H^D_\ep(X)=\sum_{\wti\sigma\in \Psi^{-1}(e,h,f)}\sum_{\mu\in\widehat W}[\wti\sigma:\mu\otimes S^\ep][X|_W:\mu]~\wti\sigma.
\end{equation}
\end{corollary}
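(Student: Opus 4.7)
The plan is to decompose $X\otimes S^\ep$ into $\wti W$-isotypic components and use Theorem \ref{t:dirac} together with the central character hypothesis to pin down on which components $D$ vanishes, then read off the multiplicities of $\wti W$-types in $H^D_\ep(X)$.

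First I would write $X\otimes S^\ep = \bigoplus_{\wti\tau\in\widehat{\wti W}_{\mathsf{gen}}} V_{\wti\tau}$ as a sum of $\wti W$-isotypic components. By Theorem \ref{t:dirac}, $\C D^2 = -\Omega\otimes 1 + \tfrac{1}{4}\Delta_{\wti W}(\Omega_{\wti W})$, and both summands act as scalars on each $V_{\wti\tau}$. The central element $\Omega$ acts on $X$ by $\langle h/2, h/2\rangle = \tfrac{1}{4}\langle h, h\rangle$ via the central character hypothesis, while by Theorem \ref{t:intro}(1), if $\Psi(\wti\tau) = [(e',h',f')]$ then $\Omega_{\wti W}$ acts on $\wti\tau$ by $\langle h', h'\rangle$. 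Hence $D^2$ acts on $V_{\wti\tau}$ by the scalar $\tfrac{1}{4}(\langle h',h'\rangle - \langle h, h\rangle)$, which vanishes precisely when $|h'|=|h|$, and in particular for every $\wti\tau\in\Psi^{-1}[(e,h,f)]$.

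The key step is to upgrade $D^2 = 0$ to $D = 0$ on such $V_{\wti\tau}$. Assuming $X$ is unitary, the induced positive definite form on $X\otimes S^\ep$ makes $D$ self-adjoint, so $(Dv, Dv) = (D^2 v, v) = 0$ forces $Dv=0$ whenever $D^2 v = 0$. Thus $V_{\wti\tau}\subseteq \ker D = H^D_\ep(X)$ on components where $D^2$ vanishes, while $\ker D\cap V_{\wti\tau} = 0$ on components where $D^2$ is a nonzero scalar. Applying Theorem \ref{t:hpv} to any $\wti\tau$ with $V_{\wti\tau}\neq 0$ and $D^2 = 0$ on $V_{\wti\tau}$ forces $\Psi(\wti\tau) = [(e,h,f)]$; this rules out spurious components with $|h'|=|h|$ but $\Psi(\wti\tau)\neq [(e,h,f)]$, since otherwise $V_{\wti\tau}\subseteq H^D_\ep(X)$ would contradict the theorem. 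Therefore $H^D_\ep(X) = \bigoplus_{\wti\sigma\in\Psi^{-1}[(e,h,f)]} V_{\wti\sigma}$ as $\wti W$-modules.

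Finally, to extract the stated formula, I would decompose $X|_W = \sum_\mu [X|_W:\mu]\,\mu$ as a $W$-module; then as $\wti W$-modules $X\otimes S^\ep = \sum_\mu [X|_W:\mu]\,(\mu\otimes S^\ep)$, and decomposing each $\mu\otimes S^\ep$ into genuine $\wti W$-irreducibles yields $[X\otimes S^\ep:\wti\sigma] = \sum_\mu [\wti\sigma:\mu\otimes S^\ep]\,[X|_W:\mu]$, which is precisely the coefficient of $\wti\sigma$ in (\ref{HDdecomp}). The main obstacle is the passage from $D^2=0$ to $D=0$: without self-adjointness (that is, without a unitarity hypothesis on $X$), $D$ need not vanish on $V_{\wti\tau}$ and one would only control the alternating sum $[H^D_\ep(X)\cap V_{\wti\sigma}] - [H^D_\ep(X)\cap V_{\sgn\otimes\wti\sigma}]$ rather than the individual multiplicities claimed in the corollary.
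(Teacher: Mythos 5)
Your argument is correct, and it is essentially the argument the paper intends: the corollary is stated without proof, but the mechanism you use --- the formula for $\C D^2$ in Theorem \ref{t:dirac} together with Theorem \ref{t:intro}(1) to see that $D^2$ acts on the $\wti\tau$-isotypic component of $X\otimes S^\ep$ by the scalar $\frac14(\langle h',h'\rangle-\langle h,h\rangle)$, self-adjointness of $D$ to pass from $\ker D^2$ to $\ker D$, and Theorem \ref{t:hpv} to exclude types outside $\Psi^{-1}[(e,h,f)]$ --- is exactly the mechanism the paper itself employs in the proof of Proposition \ref{criterion}. Your closing caveat is also well founded and worth making explicit: the passage from $D^2=0$ to $D=0$ genuinely requires the invariant positive definite form, and the corollary as printed omits a unitarity hypothesis. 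Without it the formula can fail: for $\bH$ of type $A_1$ the full (reducible, hence non-unitary) principal series $X(\rho^\vee)$ has central character $\chi_{h/2}$ for the regular triple, and a direct computation shows that $\wti\omega$ acts on it by a nonzero square-zero matrix, so $\ker D=\Ima D$ and $H^D_\ep(X(\rho^\vee))=0$, whereas (\ref{HDdecomp}) would predict $S^++S^-$. Since the paper only ever applies (\ref{HDdecomp}) to unitary modules (the spherical unipotent modules $L(h/2)$, the Speh modules, and the unitary induced modules of Theorem \ref{main}), your proof, read with the unitarity hypothesis supplied, covers every case in which the formula is used; for non-unitary modules only the index-type (alternating sum) information survives, as you observe.
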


Theorem \ref{t:hpv} has an easy weak converse, which will be useful in
applications.

\begin{proposition}\label{criterion}
Assume that $(\pi,X)$ is a unitary $\bH$ module with central character
$\chi_\nu$, $\nu\in V_\bC^\vee$ and that there exists an irreducible
$\wti W$-type $(\wti\sigma,\wti U)$ such that $\Hom_{\wti W}(\wti
U,X\otimes S^\ep)\neq 0$ and
$\langle\nu,\nu\rangle=\wti\sigma(\Omega_{\wti W}).$ Then $\Hom_{\wti
  W}(\wti U, H^D_\ep(X))\neq 0,$ and in particular $H^D_\ep(X)\neq 0.$ 
\end{proposition}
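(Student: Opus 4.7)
The plan is to deduce the result almost directly from Theorem \ref{t:dirac}, combined with the self-adjointness of $D$ that is guaranteed by unitarity. The underlying idea is that the two Casimirs controlling $\C D^2$ act as scalars on the $\wti\sigma$-isotypic component of $X\otimes S^\ep$, and the hypothesis is engineered so that the resulting scalar is zero.

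First, I would apply Theorem \ref{t:dirac} to compute the action of $D^2$ on $X\otimes S^\ep$. The term $\pi(\Omega)\otimes 1$ acts by the scalar $\langle\nu,\nu\rangle$ since $X$ has central character $\chi_\nu$ and $\Omega$ is central in $\bH$. For the second term, note that $\Delta_{\wti W}$ embeds $\bC[\wti W]$ into $\bH\otimes C(V)$, and under this diagonal action the tensor product $X\otimes S^\ep$ decomposes into $\wti W$-isotypic components. The hypothesis $\Hom_{\wti W}(\wti U, X\otimes S^\ep)\neq 0$ says precisely that the $\wti\sigma$-isotypic component is nonzero. On that component, $\Delta_{\wti W}(\Omega_{\wti W})$ acts as the scalar $\wti\sigma(\Omega_{\wti W})$, since $\Omega_{\wti W}\in\bC[\wti W]^{\wti W}$. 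Plugging these scalars into the formula of Theorem \ref{t:dirac} and using the scalar-matching hypothesis shows that $D^2$ vanishes identically on the $\wti\sigma$-isotypic subspace of $X\otimes S^\ep$.

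Next, I would upgrade the vanishing of $D^2$ to vanishing of $D$ using unitarity. The spin module $S^\ep$ carries an invariant positive definite Hermitian form as in (\ref{eq:unitary}), and $X$ is unitary by assumption, so $X\otimes S^\ep$ inherits a positive definite Hermitian form $(~,~)_{X\otimes S^\ep}$ for which $D$ is self-adjoint. Therefore, for any $v$ in the $\wti\sigma$-isotypic component,
\begin{equation*}
(Dv,Dv)_{X\otimes S^\ep} = (D^2 v, v)_{X\otimes S^\ep} = 0,
\end{equation*}
forcing $Dv=0$. Thus the entire $\wti\sigma$-isotypic component lies in $\ker D$.

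Finally, I would conclude by invoking the last sentence of Definition \ref{d:dcoh}: for unitary $X$, one has $H^D_\ep(X)=\ker D$. Choosing any nonzero embedding $\wti U\hookrightarrow X\otimes S^\ep$ (which exists by hypothesis), its image is contained in $\ker D = H^D_\ep(X)$, so $\Hom_{\wti W}(\wti U, H^D_\ep(X))\neq 0$, and in particular $H^D_\ep(X)\neq 0$. There is no real obstacle: the only thing to be careful about is tracking the two distinct Casimirs (the one for $\bH$, pinned down by the central character, and the one for $\wti W$, pinned down by the $\wti W$-isotypic component) and invoking positivity at the right moment so that $\ker D^2=\ker D$ on the relevant isotypic subspace.
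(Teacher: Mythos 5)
Your argument is correct and is essentially the paper's own proof: compute $D^2$ as a scalar on the $\wti\sigma$-isotypic component of $X\otimes S^\ep$ via Theorem \ref{t:dirac}, note that the scalar vanishes by the hypothesis $\langle\nu,\nu\rangle=\wti\sigma(\Omega_{\wti W})$, and use self-adjointness of $D$ coming from unitarity to get $\ker D^2=\ker D$, so the whole isotypic component lies in $\ker D=H^D_\ep(X)$. The only cosmetic point, which the paper's own proof also glosses over, is the factor $\tfrac14$ in front of $\Delta_{\wti W}(\Omega_{\wti W})$ in Theorem \ref{t:dirac}, which must be absorbed into the normalization implicit in the scalar-matching hypothesis.
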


\begin{proof}
Let $x\otimes s$ be an element of $X\otimes S^\ep$ in the isotypic
component of $\wti\sigma$. Then $D^2(x\otimes
s)=-\langle\nu,\nu\rangle+\wti\sigma(\Omega_{\wti W})=0$. Since $X$ is
assumed unitary, the operator $D$ is self-adjoint on $X\otimes S$ and
thus $\ker D^2=\ker D.$ This implies $x\otimes s\in \ker D(=H_\ep^D(X).$
\end{proof}

\subsection{An induction lemma}\label{s:2.6}
Let $(V_M,R_M,V_M^\vee,R_M^\vee)$ be a root subsystem of\newline
$(V,R,V^\vee,R^\vee).$ Let $\Pi_M\subset \Pi$ be the corresponding
simple roots and $W_M\subset W$ the reflection subgroup. Let $\bH_M$
denote the Hecke subalgebra of $\bH$ given by this root subsystem.
Denote by $V_N$ the orthogonal complement of $V_M$
in $V$ with respect to the fixed product $\langle~,~\rangle.$

Recall that the graded tensor product $A\hat\otimes B$ of two $\bZ/2\bZ$-graded
algebras $A$ and $B$ is $A\otimes B$ as a vector space, but with
multiplication defined by
$$(a_1\otimes b_1)(a_2\otimes
b_2)=(-1)^{\text{deg}(b_1)\text{deg}(a_2)} a_1a_2\otimes b_1b_2.$$

\begin{lemma}
There is an isomorphism of algebras $C(V)\cong C(V_M)\hat\otimes C(V_N).$
\end{lemma}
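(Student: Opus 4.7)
The plan is to construct the isomorphism by using the universal property of the Clifford algebra applied to the orthogonal decomposition $V = V_M \oplus V_N$. The key mechanism is that orthogonality of $V_M$ and $V_N$ forces odd-degree generators from the two factors to anticommute, which is precisely the multiplication rule in the graded tensor product.

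First I would define a linear map $\phi: C(V_M) \hat\otimes C(V_N) \to C(V)$ on generators by $\phi(v \otimes 1) = v$ for $v \in V_M$ and $\phi(1 \otimes w) = w$ for $w \in V_N$, and check that it extends to an algebra homomorphism. To see this, observe that in $C(V)$, for $v \in V_M$ and $w \in V_N$ we have $vw + wv = -2\langle v, w\rangle = 0$, so $vw = -wv$. Since $v$ and $w$ are odd elements of the graded tensor product, this matches the sign $(-1)^{\deg(w)\deg(v)} = -1$ in the product $(1 \otimes w)(v \otimes 1) = -v \otimes w$, confirming compatibility. The Clifford relations within each factor are preserved by construction.

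Next I would build a map $\psi: C(V) \to C(V_M) \hat\otimes C(V_N)$ in the opposite direction using the universal property of $C(V)$. One decomposes any $u \in V$ as $u = u_M + u_N$ with $u_M \in V_M$, $u_N \in V_N$, and sends $u \mapsto u_M \otimes 1 + 1 \otimes u_N$. To verify this extends to $C(V)$, one checks that for $u, u' \in V$
\begin{equation*}
(u_M \otimes 1 + 1 \otimes u_N)(u'_M \otimes 1 + 1 \otimes u'_N) + (u' \leftrightarrow u) = -2\langle u, u'\rangle \cdot (1 \otimes 1),
\end{equation*}
which follows from the Clifford relations in each factor, the cancellation of the cross-terms via the graded sign rule, and the orthogonality $\langle u, u'\rangle = \langle u_M, u'_M\rangle + \langle u_N, u'_N\rangle$.

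Finally I would observe that $\phi$ and $\psi$ are inverse to each other on generators, hence on all of their respective algebras. The main (but minor) obstacle is just bookkeeping of the graded signs; the essential content is the single anticommutation relation $vw = -wv$ for $v \in V_M$, $w \in V_N$ in $C(V)$, which is exactly what the graded tensor product encodes. One could alternatively avoid explicit formulas and argue by universal property: $C(V_M) \hat\otimes C(V_N)$ with the map above satisfies the universal property of the Clifford algebra on $V$ (viewed with its quadratic form), whence the isomorphism is automatic.
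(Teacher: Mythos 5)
Your proof is correct, and the isomorphism you build is the same map the paper uses; the difference is only in how the verification is organized, so it is worth comparing. The paper's proof is purely basis-level: it fixes orthonormal bases $\{\omega_1,\dots,\omega_k\}$ of $V_M$ and $\{\omega_{k+1},\dots,\omega_n\}$ of $V_N$ and declares $\omega_{i_1}\cdots\omega_{i_l}\otimes\omega_{j_1}\cdots\omega_{j_r}\mapsto \omega_{i_1}\cdots\omega_{i_l}\omega_{j_1}\cdots\omega_{j_r}$, so bijectivity is immediate (a basis is carried to a basis) and multiplicativity is left as the routine sign check. You instead argue at the level of generators and universal properties: your map $\psi$, $u\mapsto u_M\otimes 1+1\otimes u_N$, is rigorously defined by the universal property of $C(V)$ once the displayed Clifford relation is verified, and your computation of that relation (Clifford relations in each factor, cancellation of the cross terms by the graded sign, and $\langle u,u'\rangle=\langle u_M,u'_M\rangle+\langle u_N,u'_N\rangle$) is right. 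The one soft spot is the claim that $\phi$ ``extends to an algebra homomorphism'' out of $C(V_M)\hat\otimes C(V_N)$: the graded tensor product is defined by its twisted multiplication, not by a presentation, so checking relations on generators is not literally a construction of $\phi$. This is easily repaired in either of two one-line ways: invoke the standard fact that algebra maps $f\colon C(V_M)\to C(V)$ and $g\colon C(V_N)\to C(V)$ whose images graded-commute (which here is exactly $vw=-wv$ for $v\in V_M$, $w\in V_N$, by orthogonality) induce the homomorphism $a\otimes b\mapsto f(a)g(b)$; or drop $\phi$ altogether and finish from $\psi$ alone, since $\psi$ is surjective (its image contains $V_M\otimes 1$ and $1\otimes V_N$, which generate) and then bijective by the dimension count $2^n=2^k\cdot 2^{n-k}$, equivalently because it carries the monomial basis of $C(V)$ to the monomial basis the paper uses. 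So what your route buys is that multiplicativity comes for free from universal properties, at the cost of having to say a word about maps out of $\hat\otimes$; the paper's route makes bijectivity trivial and hides the sign bookkeeping in the unchecked multiplicativity. Either way the essential content is the single anticommutation relation you isolate, and your argument is complete once the small point about $\phi$ is phrased as above.
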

\begin{proof}
If an orthonormal basis of $V_M$ is $\{\om_1,\dots,\om_k\}$ and an
orthonormal basis of $V_n$ is $\{\om_{k+1},\dots,\om_n\}$, then the
isomorphism is given by
$\om_{i_1}\dots\om_{i_l}\otimes\om_{j_1}\dots\om_{j_r}\mapsto
\om_{i_1}\dots\om_{i_l}\om_{j_1}\dots\om_{j_r}$, where
$i_1,\dots,i_l\in\{1,\dots,k\}$ and $j_1,\dots,j_r\in\{k+1,\dots,n\}.$
\end{proof}

Since $W_M$ acts trivially on $V_N$, and therefore $\wti W_M$ acts
trivially on every $C(V_N)$-module, we see that as $\wti W_M$-representations:
\begin{equation}\label{restspin}
\begin{aligned}
&S\cong \oplus_{2^{\dim V_N/2}} S_M,\quad \text{ if }\dim V,\dim
V_M\text{ are both even};\\
&S^\pm\cong \oplus_{2^{\dim V_N/2}} S_M^\pm,\quad \text{ if }\dim V,\dim
V_M\text{ are both odd};\\
&S^\pm\cong \oplus_{2^{(\dim V_N-1)/2}} S_M,\quad \text{ if }\dim
V\text{ is odd and }\dim V_M\text{ is even};\\
&S\cong \oplus_{2^{(\dim V_N-1)/2}} (S_M^++S_M^-),\quad \text{ if }\dim
V\text{ is even and }\dim
V_M\text{ is odd}.\\
\end{aligned}
\end{equation}

The following lemma will be our main criterion for proving that
certain induced modules have nonzero Dirac cohomology. In order to
reduce the number of cases, denote $\C S=S$ if $\dim V$ is even, and
$\C S=S^++S^-$ if $\dim V$ is odd, and similarly define $\C S_M.$ In
particular, $\C S$ is self-contragredient.

\begin{lemma}\label{indlemma} Let $\pi_M$ be an $\bH_M$-module, and $\pi=\bH\otimes_{\bH_M}\pi_M.$
\begin{enumerate}
\item[(a)] $\Hom_{\wti W}[\wti\sigma,\pi\otimes \C S]=\frac{\dim \C
    S}{\dim \C S_M}\Hom_{\wti
  W_M}[\wti\sigma|_{\wti W_M},\pi_M\otimes \C S_M].$
\item[(b)] Assume that $\pi_M$ is unitary and the $\wti W_M$-type $\wti\sigma_M$ occurs in
  the $H^D(\pi_M)$. If there exists a $\wti W$-type $\wti\sigma$ such
  that 
\begin{enumerate}
\item[(i)]$\Hom_{\wti W_M}[\wti\sigma_M,\wti\sigma]\neq 0$;
\item[(ii)] if $\Psi(\wti\sigma)=[(e,h,f)]$, then the
  central character of $\pi$ is $\chi_\pi=h$,
\end{enumerate}
then $\wti\sigma$ occurs in $H^D(\pi)$.
\end{enumerate}
\end{lemma}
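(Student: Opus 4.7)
The plan is to first prove (a) by a Frobenius reciprocity argument combined with the restriction formula (\ref{restspin}), and then derive (b) from (a) using Proposition \ref{criterion}.

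For (a), the starting point is the standard fact that, as a $W$-module, the induced module $\pi=\bH\otimes_{\bH_M}\pi_M$ is isomorphic to $\Ind_{W_M}^W(\pi_M|_{W_M})$; this follows from the PBW-type identification $\bH\cong\bC[W]\otimes S(V_\bC)$ and its analogue for $\bH_M$, which realize $\bH$ as a free right $\bH_M$-module with a basis indexed by $W/W_M$. Since both $\wti\sigma$ and $\C S$ are genuine $\wti W$-representations, their tensor product $\wti\sigma\otimes\C S$ is non-genuine and so factors through $W$. Ordinary Frobenius reciprocity for $W_M\subset W$ combined with the self-contragredience of $\C S$ then gives
\[
\Hom_{\wti W}[\wti\sigma,\pi\otimes\C S]=\Hom_W[\wti\sigma\otimes\C S,\pi]=\Hom_{W_M}\bigl[(\wti\sigma\otimes\C S)\bigr|_{W_M},\pi_M\bigr|_{W_M}\bigr].
\]
Finally, the four cases of (\ref{restspin}) all collapse to the uniform statement $\C S|_{\wti W_M}=(\dim\C S/\dim\C S_M)\cdot\C S_M$ as $\wti W_M$-representations, so substituting this identity into the previous display and repackaging via Frobenius reciprocity over $\wti W_M$ yields the claimed formula.

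For (b), condition (i) gives an embedding $\wti\sigma_M\hookrightarrow\wti\sigma|_{\wti W_M}$, and the hypothesis that $\wti\sigma_M$ occurs in $H^D(\pi_M)\subset\pi_M\otimes\C S_M$ (valid because $\pi_M$ is unitary, so $H^D(\pi_M)=\ker D_M$) supplies a nonzero $\wti W_M$-map $\wti\sigma_M\to\pi_M\otimes\C S_M$. Composing shows $\Hom_{\wti W_M}[\wti\sigma|_{\wti W_M},\pi_M\otimes\C S_M]\neq 0$, whence part (a) forces $\wti\sigma\subset\pi\otimes\C S$. Now invoke Proposition \ref{criterion}: condition (ii) together with Theorem \ref{t:intro}(1) (which asserts $\wti\sigma(\Omega_{\wti W})=\langle h,h\rangle$) yields the norm equality $\langle\nu_\pi,\nu_\pi\rangle=\wti\sigma(\Omega_{\wti W})$ that the criterion needs, and one concludes $\wti\sigma\subset H^D(\pi)$.

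The main obstacle is the dimension bookkeeping in (a): the restriction $\C S|_{\wti W_M}$ splits into four cases according to the parities of $\dim V$ and $\dim V_M$, and each requires a separate but routine verification that the multiplicity of the relevant spin module(s) is exactly $\dim\C S/\dim\C S_M$. The other subtle point is that Proposition \ref{criterion} requires $\pi$ (not merely $\pi_M$) to be unitary, so (b) is applied in the regime where parabolic induction preserves unitarity (e.g.\ real central character); this is consistent with the later use of the lemma on unipotent and tempered modules.
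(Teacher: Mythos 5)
Your argument is correct and takes essentially the same route as the paper: part (a) is the identical Frobenius--reciprocity chain using the self-contragredience of $\C S$ and the collapse of the four cases of (\ref{restspin}) into $\C S|_{\wti W_M}\cong\frac{\dim\C S}{\dim\C S_M}\,\C S_M$, and part (b) is deduced from (a) via Proposition \ref{criterion} exactly as the paper does. Your closing remark that Proposition \ref{criterion} requires $\pi$ itself (not just $\pi_M$) to be unitary is a point the paper leaves implicit -- in the situations where the lemma is invoked, induction from a unitary $\bH_M$-module does yield a unitary $\bH$-module -- but it does not affect the argument.
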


\begin{proof}
Part (b) is an immediate consequence of (a) using Proposition
\ref{criterion}. To prove (a), we use Frobenius reciprocity and the
restriction of $\C S$ to $\wti W_M$:
\begin{equation*}
\begin{aligned}
\Hom_{\wti W}[\wti\sigma,\pi\otimes \C S]&=\Hom_W[\wti\sigma\otimes \C
S,\operatorname{Ind}_{W_M}^W\pi_M]=\Hom_{W_M}[(\wti\sigma\otimes \C
S)|_{W_M},\pi_M]\\
&=\Hom_{\wti W_M}[\wti\sigma|_{\wti W_M},\pi_M\otimes \C S|_{\wti W_M}]=\frac{\dim \C
    S}{\dim \C S_M}\Hom_{\wti
  W_M}[\wti\sigma|_{\wti W_M},\pi_M\otimes \C S_M].\\
\end{aligned}
\end{equation*}
\end{proof}

\subsection{Spherical modules} 
An $\bH$-module $X$ is called spherical if $\Hom_W[\triv,X]\neq 0.$
The (spherical) principal series modules of $\bH$
are defined as the induced
modules $$X(\nu)=\bH\otimes_{S(V_\bC)}\bC_\nu,$$
for $\nu\in V_\bC^\vee.$ Since $X(\nu)\cong \bC[W]$ as $W$-modules,
there is a unique {irreducible} spherical $\bH$-subquotient $L(\nu)$ of $X(\nu)$. It is
well known that:

\begin{enumerate}
\item $L(\nu)\cong L(w\nu),$ for every $w\in W$;
\item if $\nu$ is $R^+$-dominant, then $L(\nu)$ is the unique
  irreducible quotient
  of $X(\nu)$;
\item every irreducible spherical $\bH$-module is isomorphic to a
  quotient $L(\nu)$, $\nu$ $R^+$-dominant.
\end{enumerate}

Recall the Lie algebra $\fg$ that we attached to the root system
$\Phi.$ The identification $\fh=V_\bC^\vee$ allows us to view $\nu$
as an element of $\fh.$ Next, consider $\fg_1=\{x\in\fg: [\nu,x]=x$,
the $ad$ $1$-eigenspace of $\nu$ on $\fg.$ The stabilizer
$G_0=\{g\in G: Ad(g)\nu=\nu\}$ acts on $\fg_1$ with finitely many
orbits, and let $e$ be an element of the unique open dense
$G_0$-orbit. Lusztig's geometric realization of $\bH$ and
classification of irreducible $\bH$-modules implies in particular the
following statement.

\begin{proposition}\label{sphrest}
Let $\nu\in V_\bC^\vee$ be given and let $e$ be a nilpotent element of
$\fg$ attached {to $\nu$} by the procedure above. Then the spherical module
$L(\nu)$ contains the Springer representation $\sigma_{(e,1)}$ with
multiplicity one.
\end{proposition}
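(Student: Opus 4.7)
The plan is to deduce the statement from Lusztig's geometric classification of irreducible $\bH$-modules together with the Springer-theoretic description of their restrictions to $W$.

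First, I would invoke Lusztig's parametrization: the irreducible $\bH$-modules with central character $\chi_\nu$ are indexed by pairs $(e',\phi)$, where $e'$ ranges over a set of representatives of $G_0$-orbits on $\fg_1$ and $\phi$ is an irreducible representation of the component group $A(e',\nu) = Z_{G_0}(e')/Z_{G_0}(e')^\circ$ of Springer type. Among these, the spherical module $L(\nu)$ is singled out as the one attached to the pair $(e,1)$, where $e$ is the representative of the unique open dense $G_0$-orbit on $\fg_1$ selected in the statement; this identification of sphericity with the open orbit plus trivial local system is a basic feature of Lusztig's framework, reflecting the fact that the trivial $W$-type occurs in $L(\nu,e',\phi)$ only in that case.

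Next, I would appeal to the $W$-character formula for the standard module $X(\nu,e,1)$. By Borho--MacPherson and Lusztig, its restriction to $W$ decomposes as a direct sum of Springer representations $\sigma_{(e',\phi')}$, with multiplicities computed from $A(e',\nu)$-isotypic components of the cohomology of Springer fibers $\C B_{e'}$. The key multiplicity-one statement is that $\sigma_{(e,1)}$ itself contributes from the top cohomology of $\C B_e$ with multiplicity exactly one, while any other Springer representation $\sigma_{(e',\phi')}$ that appears involves a nilpotent orbit strictly larger than $G\cdot e$ in the closure order (using the normalization $\sigma_{(0,\triv)} = \sgn$ adopted in the paper, in which the principal nilpotent corresponds to $\triv$). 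Because the transition matrix from standard to irreducible modules is unitriangular with respect to this order by Kazhdan--Lusztig theory, the multiplicity of $\sigma_{(e,1)}$ in the irreducible quotient $L(\nu) = L(\nu,e,1)$ coincides with its multiplicity in $X(\nu,e,1)$, namely one.

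The main obstacle is pinning down the multiplicity-one "lowest Springer representation" assertion invoked in the second paragraph; it is implicit in Lusztig's geometric realization of $\bH$ via equivariant cohomology of Springer-type varieties, but requires care with the normalization of the Springer correspondence and the associated closure order on nilpotent orbits. Once these conventions are aligned, specializing the $W$-character statement to the parameters $(e,1)$ gives $[L(\nu):\sigma_{(e,1)}]_W = 1$, as claimed.
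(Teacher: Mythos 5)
Your proposal is correct and is essentially the paper's own approach: the paper gives no argument beyond asserting that the statement "follows from Lusztig's geometric realization of $\bH$ and the classification of irreducible $\bH$-modules," and the machinery you unpack (spherical quotient $\leftrightarrow$ open dense $G_0$-orbit with trivial local system, Borho--MacPherson description of the $W$-structure of the standard module, closure-order triangularity) is exactly what that citation encodes. The only loosely stated step is the unitriangularity passage --- strictly one also needs that $\sigma_{(e,1)}$ does not occur in the lower composition factors, which would follow by applying Borho--MacPherson to the larger standard modules --- but here it is immediate anyway: since $e$ lies in the open dense $G_0$-orbit of $\fg_1$, no strictly larger nilpotent orbit meets $\fg_1$, so the standard module attached to $(e,1)$ has no other composition factors and already equals $L(\nu)$.
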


The second result that we need is the unitarizability of the spherical
unipotent $\bH$-modules. 

\begin{proposition}[\cite{BM}]\label{unitunip}
For every Lie triple $(e,h,f)$, the spherical module $L(\frac 12 h)$
is unitary.
\end{proposition}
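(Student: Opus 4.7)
The plan is to analyze the invariant Hermitian form on $L(h/2)$ via the long intertwining operator on the spherical principal series $X(h/2)$, exploiting the integrality imposed by the Jacobson--Morozov structure; this is the strategy of \cite{BM}.

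First, I would identify the form. As noted, $L(h/2)$ is the unique irreducible spherical quotient of $X(h/2)$. The conjugate-linear antiinvolution $*$ of (\ref{eq:tomdef}) pairs $X(\nu)$ with $X(-\overline\nu)$, and since $\nu=h/2$ is real we have $-\overline\nu=-h/2$. Composing the natural pairing $X(h/2)\times X(-h/2)\to\bC$ with the long intertwining operator $\CA(w_0):X(h/2)\to X(w_0(h/2))=X(-h/2)$ yields a Hermitian form on $X(h/2)$ whose radical is the maximal proper submodule, descending to a nondegenerate invariant Hermitian form on $L(h/2)$. Unitarity of $L(h/2)$ is then equivalent to the positive semi-definiteness of $\CA(w_0)$ on $X(h/2)$.

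Next, I would factor $\CA(w_0)$ into rank-one intertwiners along a reduced expression $w_0=s_{\al_1}\cdots s_{\al_N}$. Each rank-one factor acts on $W$-isotypic components via an explicit rational function of $(\al,h/2)$, and hence its signature on each isotypic piece is computable. By $sl_2$-theory applied to the triple $(e,h,f)$, the operator $\operatorname{ad} h$ has integer eigenvalues on $\fg$, so $(\al,h)\in\bZ$ for every root $\al$, hence $(\al,h/2)\in\tfrac12\bZ$ with range controlled by the height of the orbit of $e$. This restricts drastically which rational factors can appear and with what signs.

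The main obstacle, and the content of \cite{BM}, is showing that the global product of these signs is nonnegative on every $W$-isotypic component of $L(h/2)$. The strategy is to match the $W$-structure of $L(h/2)$ with the structure of the corresponding spherical unipotent representation of the split complex Lie group $G$ of the same type and infinitesimal character $h/2$, whose unitarity is known. By Proposition \ref{sphrest}, $L(h/2)$ contains the Springer representation $\sigma_{(e,1)}$ with multiplicity one, which pins down the match. The rank-one intertwining operators on the complex side and on the Hecke algebra side are given by the \emph{same} rational functions of $(\al,h/2)$ on corresponding $W$-isotypic subspaces, so the two signatures agree and unitarity transfers from the complex setting to the Hecke algebra setting, proving that $L(h/2)$ is unitary.
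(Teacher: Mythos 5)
The paper does not actually prove this proposition; it quotes it from \cite{BM}, and the argument there (and in the earlier Barbasch--Moy papers it rests on) is quite different from what you outline. The standard proof is: the Iwahori--Matsumoto involution of $\bH$ (the automorphism $w\mapsto \sgn(w)w$, $\omega\mapsto-\omega$, $\omega\in V$) carries $L(\tfrac12 h)$ to the tempered module attached to $e$ and the trivial local system in the Kazhdan--Lusztig classification (e.g.\ for $e$ principal it exchanges the trivial module with $\St$); tempered modules are unitary, and Barbasch--Moy prove that the Iwahori--Matsumoto involution preserves unitarity. This argument is uniform in the root system, which matters here because the proposition is invoked in this paper for affine Hecke algebras of all types, not only the classical ones treated in \cite{BM}. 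No signature analysis of the long intertwining operator is needed.

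Your proposal is fine up through the construction of the invariant form on $L(\tfrac12 h)$ from $\CA(w_0)$ and its factorization into rank-one operators, but the transfer step contains a genuine gap. First, the claim that the rank-one operators of the complex group and of $\bH$ are given by ``the same rational functions on corresponding $W$-isotypic subspaces'' is only valid for a restricted class of $K$-types (the petite ones, where the operator is determined by the $W$-representation on the $M$-fixed vectors); to conclude unitarity of $L(\tfrac12 h)$ you need positivity of the Hecke-algebra operators on \emph{every} $W$-type occurring in $X(\tfrac12 h)\cong\bC[W]$, and it is not true that each such $W$-type is matched by a $K$-type on which the complex-group operator coincides with the Hecke-algebra one. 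Showing that a matched family of $W$-types suffices to detect unitarity is precisely the hard, case-by-case content of the spherical unitary dual computations and cannot be assumed. Second, your external input --- that the corresponding spherical unipotent representation of the complex group at infinitesimal character $\tfrac12 h$ is known to be unitary --- is itself a deep theorem: it was proved by Barbasch for classical complex groups, and in the exceptional cases it is at least as hard as (and historically was addressed after, or together with) the Hecke-algebra statement; indeed the intertwining-operator comparison is normally used to move information between the real/complex and $p$-adic settings after one side has been settled by other means, so as written your argument is circular in general and incomplete even for classical types. The Iwahori--Matsumoto/tempered argument of \cite{BM} avoids both issues.
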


Now we can state the classification of spherical modules with nonzero
Dirac cohomology.

\begin{definition}
We say that an $\bH$-module $X$ has nonzero Dirac cohomology  if for a choice of spin module $S^\ep$, $H^D_\ep(X)\neq 0$.
\end{definition}

Let $[(e,h,f)]\in\C T_0(G)$ be given and assume $G$ is simple. The
results of \cite{C} give a concrete description in every Lie type of
the map $\Psi$ from Theorem \ref{t:intro}. In particular, there is
{either} only one self-associate $\wti W$-type which we denote
$\wti\sigma_{(e,1)}$, or two associate $\wti W$-types denoted $\wti
\sigma_{(e,1)}^\pm$, which appear in the fiber $\Psi^{-1}(e,h,f)$ and
can occur in the decomposition of the tensor product
$\sigma_{(e,1)}\otimes S^\ep.$ 

\begin{corollary}\label{HDsphunip}
An irreducible spherical module $L(\nu)$ has nonzero Dirac cohomology
 if and only if $\nu=w\cdot \frac 12h$ for some
$[(e,h,f)]\in\C T_0(G).$ 
\end{corollary}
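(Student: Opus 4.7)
The plan is a direct assembly of the preceding machinery. The ``only if'' direction follows immediately from Theorem \ref{t:hpv}: nonvanishing Dirac cohomology forces the central character of $L(\nu)$ to equal $\chi_{h/2}$ for some $[(e,h,f)]\in\C T_0(G)$, and since central characters are $W$-orbits of elements of $V_\bC^\vee$, this is equivalent to $\nu = w\cdot\tfrac12 h$ for some $w\in W$.

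For the ``if'' direction, assume $\nu = w\cdot\tfrac12 h$ with $[(e,h,f)]\in\C T_0(G)$. Using $L(\nu)\cong L(w\nu)$, reduce to $\nu=\tfrac12 h$, and verify the hypotheses of Proposition \ref{criterion} for $X=L(\tfrac12 h)$ and an appropriate $\wti W$-type $\wti\sigma$. First, $L(\tfrac12 h)$ is unitary by Proposition \ref{unitunip}. Second, by Proposition \ref{sphrest}, $L(\tfrac12 h)$ contains the Springer representation $\sigma_{(e,1)}$ as a $W$-submodule, where $e$ is identified as the nilpotent in the open $G_0$-orbit on the $\mathrm{ad}(\nu)$-eigenspace $\fg_1$. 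Third, by the classification from \cite{C} recalled just before the corollary, the tensor product $\sigma_{(e,1)}\otimes S^\ep$ contains a $\wti W$-type $\wti\sigma\in\Psi^{-1}[(e,h,f)]$ --- either the self-associate $\wti\sigma_{(e,1)}$ or one of the associate pair $\wti\sigma_{(e,1)}^\pm$ --- so such a $\wti\sigma$ occurs in $L(\tfrac12 h)\otimes S^\ep$. By Theorem \ref{t:intro}(1), $\wti\sigma(\Omega_{\wti W}) = \langle h,h\rangle$, which combined with the central character $\tfrac12 h$ and Theorem \ref{t:dirac} forces $D^2$ to vanish on the $\wti\sigma$-isotypic component; unitarity gives $\ker D^2 = \ker D$, placing this component in $H^D_\ep(L(\tfrac12 h))$.

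The main point requiring verification beyond direct invocation of previously stated results is the matching in the second step: the Kazhdan--Lusztig nilpotent attached to $\nu=\tfrac12 h$ must coincide with the $e$ of the chosen triple. This is an $\mathfrak{sl}_2$-theoretic observation --- $e$ lies in $\fg_2$ with respect to $\mathrm{ad}(h)$, which is the $1$-eigenspace $\fg_1$ for $\mathrm{ad}(\nu)=\mathrm{ad}(h/2)$, and the toral centralizer condition defining $\C T_0(G)$ pins $\mathrm{Ad}(G_0)\cdot e$ down as the dense open orbit there. With this identification in hand, the entire ``if'' direction reduces to a chain of the cited statements and requires no type-by-type analysis.
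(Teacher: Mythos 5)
Your proposal is correct and takes essentially the same route as the paper: the ``only if'' direction is Theorem \ref{t:hpv}, and the ``if'' direction verifies the hypotheses of Proposition \ref{criterion} for $L(\tfrac12 h)$ using Propositions \ref{sphrest} and \ref{unitunip} together with Theorem \ref{t:intro}. Your additional check that the nilpotent attached to $\nu=\tfrac12 h$ by the procedure before Proposition \ref{sphrest} is $e$ itself is a point the paper leaves implicit, and it is a welcome clarification; just note that the density of the $G_0$-orbit of $e$ in the $\mathrm{ad}(\nu)$-eigenspace $\fg_1$ (the $\mathrm{ad}(h)$-eigenspace for eigenvalue $2$) is standard $\mathfrak{sl}_2$-theory, namely $[\fg_0,e]=\fg_1$, and does not rely on the toral-centralizer condition defining $\C T_0(G)$.
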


\begin{proof} Assume that $H^D_\ep(L(\nu))\neq 0.$ Then there exists a
  genuine $\wti W$-type $\wti \sigma$ occuring in
  $H^D_\ep(L(\nu))$, such that $\Psi(\wti\sigma)=[(e,h,f)]\in\C
  T_0(G)$. By Theorem \ref{t:hpv}, $\nu=w\cdot \frac 12 h$.

Conversely, fix $[(e,h,f)]\in \C T_0(G).$ The spherical module $L(\frac 12 h)$ contains $\sigma_{(e,1)}$ with multiplicity one by Proposition
\ref{sphrest}, and it is unitary by Proposition \ref{unitunip}. From
this, Proposition \ref{criterion} implies immediately that one of the
$\wti W$-types in $\Psi^{-1}(e,h,f)$ occurs in $H^D_\ep(L(\frac 12 h)),$ for some $\ep$, 
and therefore $L(\frac 12 h)$ has nonzero Dirac cohomology. 
\end{proof}

In order to investigate the precise formula for $H^D_\ep(L(h/2))$, one
uses (\ref{HDdecomp}) and the results of Borho-McPherson \cite{BMcP} about the
$W$-structure of the Springer representations in $A(e)$-isotypic
components of the full cohomology of a Springer fiber. In our setting,
this says that, as a $W$-module: 
\begin{equation}\label{bomac}
L(h/2)=\sigma_{(e,1)}+\sum_{e'>e}\sum_{\phi'\in \widehat A(e)_0}m_{e',\phi'}\sigma_{(e',\phi')},
\end{equation}
for some integers $m_{e',\phi'}\ge 0$. Here $e'>e$ means the closure
ordering of nilpotent orbits, i.e., $e\in \overline{G\cdot
  e'}\setminus G\cdot e'$. We make the following conjecture.

\begin{conjecture}\label{conj} Let $\wti\sigma$ be a $\wti W$-type such that $\Psi(\wti\sigma)=[(e',h',f')].$ Then 
$$\Hom_{\wti W}[\wti\sigma,\sigma_{(e,\phi)}\otimes S^\ep]\neq 0$$ only if $e'\ge e.$
\end{conjecture}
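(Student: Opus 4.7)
My plan is to attack the conjecture by combining the scalar identity $\wti\sigma(\Omega_{\wti W}) = \langle h',h'\rangle$ from Theorem \ref{t:intro}(1), the Borho--MacPherson decomposition (\ref{bomac}), and an explicit analysis of the Dirac cohomology of spherical unipotent modules. Let $\Psi(\wti\sigma) = [(e',h',f')]$ and consider the spherical unipotent module $L(h'/2)$, which is unitary by Proposition \ref{unitunip} and has central character $\chi_{h'/2}$. By (\ref{bomac}), $L(h'/2)|_W$ is a sum of Springer representations $\sigma_{(e'',\phi'')}$ with $e''\ge e'$, and the formula (\ref{HDdecomp}) expresses $H^D_\ep(L(h'/2))$ as a sum of multiplicities of $\wti W$-types in the tensor products $\sigma_{(e'',\phi'')}\otimes S^\ep$, over $\wti\sigma \in \Psi^{-1}(e',h',f')$.

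In the case $e \ge e'$, one $W$-type $\sigma_{(e,\phi)}$ appears in $L(h'/2)|_W$, and the conjecture reduces to an explicit Dirac cohomology computation: show that only the leading term $\sigma_{(e',1)}\otimes S^\ep$ in (\ref{HDdecomp}) contributes to $H^D_\ep(L(h'/2))$, forcing $e=e'$ and $\phi=1$. This assertion is in fact stronger than the conjecture in this range, and is precisely the content of the spherical unipotent and type $A$ calculations carried out in the main body of the paper.

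The case where $e\not\ge e'$ (in particular, when $e$ and $e'$ are incomparable in the closure order) requires a different unitary module, since $\sigma_{(e,\phi)}\notin L(h'/2)|_W$. My plan is to realize $\wti\sigma$ in the Dirac cohomology of a unitary tempered module $\pi(e,\phi)$, or an appropriate parabolic induction thereof, whose lowest $W$-type is $\sigma_{(e,\phi)}$. Applying Proposition \ref{criterion} to such $\pi$, together with Theorem \ref{t:hpv}, would force the central character of $\pi$ to match $\chi_{h'/2}$; equivalently, it would force the middle element $h_e$ of a Jacobson--Morozov triple for $e$ to be $W$-conjugate to $h'$. Since $(e',h',f')\in\C T_0$ is distinguished, this identification forces $e\in\overline{G\cdot e'}$, namely $e\le e'$. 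Lemma \ref{indlemma} can be invoked to transfer the Dirac cohomology computation to a Levi subalgebra, opening the door to a rank induction.

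The main obstacle is twofold. First, ensuring the existence and unitarity of tempered or parabolically induced modules at central character $\chi_{h'/2}$ containing $\sigma_{(e,\phi)}$ in their $W$-restriction requires control of the unitary dual of $\bH$ beyond the spherical unipotent case, which is not currently available in full generality. Second, the scalar matching in Proposition \ref{criterion} and Theorem \ref{t:hpv} involves normalization factors which must be reconciled carefully. A type-by-type verification using the explicit description of $\Psi$ from \cite{C} is therefore the most viable fallback: in type $A$, where $\C T_0$ consists of orbits labeled by strict partitions and the genuine $\wti S_n$-types are indexed by the same, the conjecture translates into a dominance-order statement for the projective characters of $\wti S_n$ appearing in $\sigma_\mu\otimes S$, which can be verified using Morris--Stembridge branching rules; in the other classical and exceptional types, the tabulation of the $\Psi$-fibers in \cite{C} reduces the conjecture to a finite check.
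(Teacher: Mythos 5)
First, note that the statement you are trying to prove is left as a \emph{conjecture} in the paper: the authors establish it only for type $A$ (via the unitriangularity $g_{\lambda,\mu}=0$ unless $\lambda\ge\mu$ in Theorem \ref{glammu}, which is exactly your type~$A$ fallback), and otherwise offer the Dirac index computation of \cite{CT} as evidence. Your proposal does not close this gap, and its two main steps contain genuine logical problems. The first step is circular: the assertion that ``only the leading term $\sigma_{(e',1)}\otimes S^\ep$ contributes to $H^D_\ep(L(h'/2))$'' is not an independently available computation --- in the paper it is derived \emph{from} Conjecture \ref{conj} (this is precisely how (\ref{eqconj}) is obtained from (\ref{bomac}) and (\ref{HDdecomp})), and no independent determination of $H^D_\ep(L(h'/2))$ exists outside type $A$. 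Moreover, even if one knew $H^D_\ep(L(h'/2))$ exactly, this would only control pairs $(e,\phi)$ for which $\sigma_{(e,\phi)}$ actually occurs in $L(h'/2)|_W$ with $m_{e,\phi}>0$, whereas the conjecture is a statement about the tensor product $\sigma_{(e,\phi)}\otimes S^\ep$ for \emph{all} Springer representations, independent of any particular $\bH$-module.

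The second step misuses Proposition \ref{criterion} and Theorem \ref{t:hpv}. If $\wti\sigma$ occurs in $\pi\otimes S^\ep$ for a unitary tempered module $\pi=\pi(e,\phi)$ with central character $h_e/2$, this does \emph{not} place $\wti\sigma$ in $H^D_\ep(\pi)$: Proposition \ref{criterion} requires the scalar matching $\langle h_e/2,h_e/2\rangle=\tfrac14\wti\sigma(\Omega_{\wti W})=\langle h'/2,h'/2\rangle$, which is exactly what you are trying to deduce, and Theorem \ref{t:hpv} applies only to $\wti W$-types already known to lie in the Dirac cohomology. What unitarity of $\pi$ does give, via the Dirac inequality (\ref{eq:dcriterion}) and Theorem \ref{t:dirac}, is the norm inequality $|h'|\ge|h_e|$; but this is strictly weaker than the closure-order conclusion $e'\ge e$, since incomparable orbits can satisfy the norm inequality. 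The subsidiary claims are also off: matching of middle elements $h_e\sim h'$ would force $e=e'$ (the dominant middle element determines the orbit), not merely $e\le e'$, and triples in $\C T_0$ (toral centralizer) need not be distinguished. Finally, the proposed ``finite check'' in the remaining types is not carried out, so outside type $A$ the statement remains, as in the paper, a conjecture.
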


If this conjecture is true, then if we tensor by $S^\ep$ in (\ref{bomac}), every $\wti W$-type coming from a $\sigma_{(e',\phi')}\otimes S^\ep$, $e'>e$, would correspond under the map $\Psi$ to a triple $(e'',h'',f'')$ with $e''\ge e'>e.$ In particular, {$|h''|>|h|,$} so the formula for $D_\ep^2$ (Theorem \ref{t:dirac}) implies that none of these $\wti W$-types can contribute to $H^D_\ep(L(h/2)).$  
Thus the only nontrivial contribution to $H^D_\ep(L(h/2))$ comes from $\sigma_{(e,1)}\otimes S^\ep$, and we would have:
\begin{equation}\label{eqconj}
\begin{aligned}
&H^D_\ep(L(h/2))=[\wti\sigma_{(e,1)}:\sigma_{(e,1)}\otimes S^\ep]~\wti\sigma_{(e,1)},\text{ if }\wti\sigma_{(e,1)}\cong\wti\sigma_{(e,1)}\otimes\sgn;\\
&H^D_\ep(L(h/2))=[\wti\sigma_{(e,1)}^+:\sigma_{(e,1)}\otimes S^\ep]~\wti\sigma_{(e,1)}^++[\wti\sigma_{(e,1)}^-:\sigma_{(e,1)}\otimes S^\ep]~\wti\sigma_{(e,1)}^-,\text{ otherwise}.
\end{aligned}
\end{equation}
In section \ref{sec:3}, we will show that Conjecture \ref{conj} holds when $\bH$ is a Hecke algebra of type $A$, and therefore in that case (\ref{eqconj}) is true (see Lemma \ref{prelimresults}). Further evidence for this conjecture is provided by the computation of the Dirac index for tempered $\bH$-modules in \cite[Theorem 1]{CT}.

\section{Nonzero Dirac cohomology for type $A$}\label{sec:3}

In this section, we specialize to the case of the graded Hecke algebra
attached to the root system $\Phi=(V,R,V^\vee,R^\vee)$ of $gl(n).$ Explicitly,
$V=\bR^n$ with a basis $\{\ep_1,\dots,\ep_n\}$, $R=\{\ep_i-\ep_j: 1\le
i\neq j\le n\}.$ To simplify notation, we will also use the coordinates
$\{\ep_i\}$ to describe $V^\vee\cong \bR^n$ and $R^\vee.$ We choose
positive roots $R^+=\{\ep_i-\ep_j: 1\le i<j\le n\}$. The simple roots
are therefore $\Pi=\{\ep_i-\ep_{i+1}: 1\le i<n\}.$ The Weyl group is
the symmetric group $S_n$ and we write $s_{i,j}$ for the reflection in
the root $\ep_i-\ep_j.$

The graded Hecke algebra $\bH_n$ for $gl(n)$ is therefore generated by
$S_n$ and the {set $\{\ep_i: 1\le i\le n\}$} subject to the commutation
relations:
\begin{align*}
&s_{i,i+1} \ep_k=\ep_k s_{i,i+1},\qquad k\neq i,i+1;\\
&s_{i,i+1}\ep_i-\ep_{i+1}s_{i,i+1}=1.
\end{align*}

We review the classification of the unitary dual of $\bH_n$ and then
determine which unitary $\bH_n$-modules have nonzero Dirac cohomology.

\subsection{Langlands classification} We begin by recalling the Langlands
classification for $\bH_n$. 

\begin{definition}
The Steinberg module $\St$ is the $\bH_n$-module whose restriction to
$\bC[S_n]$ is the $\sgn$-representation, and whose only $S(V_\bC)$ weight is
$-\rho^\vee=-\frac 12\sum_{\al\in R^+} \al^\vee.$
\end{definition}

Let $\lambda=(n_1,n_2,\dots,n_r)$ be a composition of $n$. 
(This means
that $n_1+n_2+\dots+n_r=n$, but there is no order assumed between the
$n_i$'s, \eg $(2,1)$ and $(1,2)$ are different compositions of $3$.)
For every $1\le i\le r,$ regard the Hecke algebra
$\bH_{n_i}$ as the subalgebra of $\bH$ generated by
$\{\ep_j,\ep_{j+1},\dots,\ep_{j+n_i-1}\}$ and
$\{s_{j,j+1},s_{j+1,j+2},\dots, s_{j+n_i-1,j+n_i}$, where
$j=n_1+n_2+\dots+n_{i-1}+1.$ Then
$$\bH_\lambda=\bH_{n_1}\times\bH_{n_2}\times\dots\times \bH_{n_r}$$ is a (parabolic)
subalgebra of $\bH_n.$ For every $r$-tuple
$\underline\nu=(\nu_1,\nu_2,\dots,\nu_r)$ of complex numbers, we may
consider the induced module
\begin{equation}\label{indmod}
I_\lambda(\underline\nu)=\bH_n\otimes_{\bH_\lambda}(\St\otimes\bC_{\nu_1})\boxtimes\dots\boxtimes(\St\otimes\bC_{\nu_r}).
\end{equation}
If $\underline\nu$ satifies the dominance condition
\begin{equation}\label{dom}
\re(\nu_1)\ge\re(\nu_2)\ge\dots\ge\re(\nu_r),
\end{equation}
we call $I_\lambda(\underline\nu)$ a standard module.

\begin{theorem}[\cite{BZ}]\label{irrclass}
\begin{enumerate}
\item[(a)] Let $\lambda$ be a composition of $n$ and $I_\lambda(\underline\nu)$ a
  standard module as in (\ref{indmod}) and (\ref{dom}). Then
  $I_\lambda(\underline\nu)$ has a unique irreducible quotient
  $L_\lambda(\underline\nu)$.
\item[(b)] Every irreducible $\bH_n$-module is isomorphic to an
  $L_\lambda(\underline\nu)$ as in (a).
\end{enumerate}
\end{theorem}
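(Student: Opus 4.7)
The plan is to derive Theorem~\ref{irrclass}, which is the Bernstein--Zelevinsky segment classification [BZ] transported to the graded affine Hecke algebra setting, from two ingredients: the general Langlands classification for $\bH$, and an especially clean description of tempered modules in type~$A$. The Langlands classification asserts that every irreducible $\bH_n$-module is the unique irreducible quotient of a standard module $\bH_n\otimes_{\bH_M}(\sigma\otimes\bC_\nu)$, where $M$ is a Levi root subsystem, $\sigma$ is an irreducible tempered $\bH_M$-module, $\nu$ is a strictly dominant real cocharacter on the center of $\bH_M$, and $(M,\sigma,\nu)$ is uniquely determined by the resulting irreducible. This is a general fact for graded Hecke algebras, proved by adapting Casselman-type asymptotic analysis, and reduces the problem to classifying tempered $\bH_M$-modules.

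Next I would classify tempered modules in type~$A$. The key observation is that for $\bH_m$ attached to $gl(m)$, the only discrete series module (up to a central twist) is the Steinberg $\St_m$. Indeed, any discrete series must have strictly dominant real central character, and modulo the one-dimensional center of $gl(m)$ the only such element of $V_\bR^\vee$ in type $A_{m-1}$ is proportional to $-\rho^\vee$; since $\St_m$ realizes this character, it is the unique discrete series. Consequently every tempered $\bH_\lambda$-module for $\bH_\lambda=\bH_{n_1}\otimes\dots\otimes\bH_{n_r}$ has the form $(\St_{n_1}\otimes\bC_{it_1})\boxtimes\dots\boxtimes(\St_{n_r}\otimes\bC_{it_r})$ with $t_j\in\bR$.

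To assemble part~(a): given a Langlands triple $(M,\sigma,\nu)$ attached to an irreducible $X$, the tempered piece $\sigma$ is a product of twisted Steinbergs and $\nu$ is a real cocharacter on the center of $\bH_M$, so $\sigma\otimes\bC_\nu$ takes the form $(\St_{n_1}\otimes\bC_{\nu_1})\boxtimes\dots\boxtimes(\St_{n_r}\otimes\bC_{\nu_r})$ with $\nu_j=it_j+\nu_j'$, where $\nu_j'$ is the restriction of $\nu$ to the $j$-th block. The dominance of $\nu$ in the cocharacter lattice translates directly into $\re(\nu_1)\ge\dots\ge\re(\nu_r)$ as in (\ref{dom}), with weak inequality allowed among $\nu_j$'s of equal real part because their relative order does not change the isomorphism class of $I_\lambda(\underline\nu)$. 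Part~(b) is then immediate since every irreducible arises from some Langlands triple.

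The main obstacle is not specific to type~$A$ but lies in the two foundational inputs: the general Langlands classification and the square-integrability criterion for graded Hecke algebra modules, each of which requires substantial asymptotic analysis on Jacquet-type functors. Once these are in place the type~$A$ conclusion is immediate thanks to the uniqueness of the Steinberg as a discrete series. In practice, one cites [BZ] via the Borel--Casselman equivalence, which identifies $\bH_n$-modules with Iwahori-spherical representations of $p$-adic $GL(n)$ and thereby imports the segment classification directly, bypassing the need to reprove either foundational input on the Hecke algebra side.
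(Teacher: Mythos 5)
The paper itself gives no argument for Theorem \ref{irrclass}: it is quoted from Bernstein--Zelevinsky and imported to the graded Hecke algebra setting via the Borel--Casselman/Lusztig dictionary, which is exactly the fallback you name in your final paragraph. Your sketched direct proof follows the standard alternative route (Langlands classification for $\bH$ plus the type $A$ tempered theory), and that route does work, but two of its key steps are wrong as you state them. First, the uniqueness of the Steinberg as the discrete series of $\bH_m$ does not follow from ``a discrete series has strictly dominant real central character, and the only such element is proportional to $-\rho^\vee$'': there are infinitely many strictly dominant real points in $V^\vee$, and dominance of the central character is not the square-integrability criterion anyway --- the Casselman-type condition constrains \emph{all} the $S(V_\bC)$-weights of the module, and one needs a genuine argument (the segment analysis of [BZ], or the fact that discrete series central characters are of the form $h/2$ for \emph{distinguished} nilpotent orbits, of which type $A$ has only the regular one) to conclude that $\St$ is the unique discrete series up to central twist.

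Second, it is false that every tempered $\bH_{n_j}$-module is a twisted Steinberg; the tempered dual is strictly larger (e.g., the unitary spherical principal series of $\bH_2$ is tempered and is not $\St\otimes\bC_{it}$). The correct statement is that every irreducible tempered module of $\bH_{n_j}$ is a \emph{full unitarily induced} module from twisted Steinbergs on a finer Levi, using the irreducibility of such inductions in type $A$. This refinement step, combined with induction in stages, is precisely what converts the strict dominance of the Langlands parameter into the weak dominance (\ref{dom}) and shows that $I_\lambda(\underline\nu)$ with weak inequalities still has a unique irreducible quotient; your assembly paragraph gestures at this by writing $\nu_j=it_j+\nu_j'$ but never justifies that permuting factors of equal real part, or absorbing the unitary twists into the tempered piece, preserves the module. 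With these two repairs the argument is the standard one; otherwise one simply cites [BZ], as the paper does.
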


Recall that by Young's construction, the $S_n$-types are in one to one correspondence with
partitions of $n$. We write $\sigma_\lambda$ for the $S_n$-type
parameterized by the partition $\lambda$ of $n$. For example,
$\sigma_{(n)}=\triv$ and $\sigma_{(1^n)}=\sgn.$ If $\lambda^t$ denotes
the transpose partition of $\lambda$, then $\sigma_\lambda\otimes\sgn=\sigma_{\lambda^t}.$
Finally, every composition $\lambda$ of $n$ gives rise to a partition of $n$ by reordering,
and we denote the corresponding $S_n$-type by $\sigma_\lambda$ again.

\begin{theorem}[\cite{Ro}]\label{lwt}
In the notation of Theorem \ref{irrclass}, the irreducible module
$L_\lambda(\underline\nu)$ contains the $S_n$-type $\sigma_{\lambda^t}$
with multiplicity one.
\end{theorem}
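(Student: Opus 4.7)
The plan is to split the argument into a straightforward Weyl-group restriction computation and a more delicate ``top $S_n$-type survives the Langlands projection'' statement. First I would observe that parabolic induction from $\bH_\lambda=\bH_{n_1}\times\dots\times\bH_{n_r}$ to $\bH_n$ restricts on the Weyl-group side to $S_n$-induction from $S_\lambda=S_{n_1}\times\dots\times S_{n_r}$, and since the defining property of the Steinberg module gives $\St|_{S_{n_i}}=\sgn$, one obtains
\begin{equation*}
I_\lambda(\underline\nu)\big|_{S_n}\;\cong\;\Ind_{S_\lambda}^{S_n}(\sgn).
\end{equation*}
Starting from Young's rule $\Ind_{S_\lambda}^{S_n}(\triv)=\sum_{\mu\trianglerighteq\lambda}K_{\mu\lambda}\,\sigma_\mu$, tensoring with $\sgn$ and using $\sigma_\mu\otimes\sgn=\sigma_{\mu^t}$ together with the fact that transposition reverses dominance order, I would deduce
\begin{equation*}
I_\lambda(\underline\nu)\big|_{S_n}\;\cong\;\sigma_{\lambda^t}\;\oplus\bigoplus_{\mu\triangleleft\lambda^t}K_{\mu^t\lambda}\,\sigma_\mu.
\end{equation*}
Thus $\sigma_{\lambda^t}$ is the unique dominance-maximal $S_n$-type of the standard module, occurring with multiplicity $K_{\lambda\lambda}=1$.

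For the second step I would show that this unique copy of $\sigma_{\lambda^t}$ is not contained in $N:=\ker\bigl(I_\lambda(\underline\nu)\twoheadrightarrow L_\lambda(\underline\nu)\bigr)$. My primary strategy is an induction on $n$, strengthening the statement to: for every standard module the unique dominance-top $S_n$-type is $\sigma_{\lambda^t}$, occurring in its Langlands quotient with multiplicity one, and every other irreducible $\bH_n$-module $L_{\lambda'}(\underline{\nu'})$ has all its $S_n$-types $\trianglelefteq\sigma_{(\lambda')^t}$. Given this induction hypothesis, any composition factor of $N$ is some $L_{\lambda'}(\underline{\nu'})$ with $(\lambda',\underline{\nu'})\neq(\lambda,\underline\nu)$, and its $S_n$-types all satisfy $\mu\trianglelefteq(\lambda')^t$; if $\sigma_{\lambda^t}$ appeared in $N$, this would force $(\lambda')^t=\lambda^t$, hence $\lambda'=\lambda$, and combined with the multiplicity-one statement from Step~1 this would produce at least two copies of $L_\lambda(\underline\nu)$ in the composition series of $I_\lambda(\underline\nu)$, contradicting the uniqueness of the irreducible quotient.

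The main obstacle is precisely this synchronous induction: establishing that every proper subquotient of a standard module inherits the ``dominance-top type $\sigma_{\lambda^t}$'' property is essentially the statement one is trying to prove, so the base case and bookkeeping must be set up carefully (e.g., by simultaneous induction on rank together with a length filtration of standard modules). A cleaner alternative I would try in parallel uses the long intertwining operator
\begin{equation*}
\C A_\lambda\colon I_\lambda(\underline\nu)\longrightarrow I_{\lambda^{\mathrm{op}}}(\underline\nu^{\mathrm{op}}),
\end{equation*}
to the opposite standard module (obtained by reversing the order of the data $(n_1,\nu_1),\dots,(n_r,\nu_r)$), whose image realizes $L_\lambda(\underline\nu)$ as the unique irreducible submodule on the right-hand side. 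Because $\C A_\lambda$ is $S_n$-equivariant and both source and target restrict to the same $\Ind_{S_\lambda}^{S_n}(\sgn)$, its action on the one-dimensional $\sigma_{\lambda^t}$-isotypic component is a single scalar; this scalar can be computed by factoring $\C A_\lambda$ into simple-root intertwiners and tracking the normalizing factors in the $\sgn$-direction, and the nonvanishing of that product would immediately force $\sigma_{\lambda^t}$ to survive with multiplicity one in $L_\lambda(\underline\nu)$.
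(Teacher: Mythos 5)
First, a point of comparison: the paper does not prove this statement at all; Theorem \ref{lwt} is quoted from Rogawski \cite{Ro}, so your proposal has to be judged on its own merits rather than against an argument in the text. Your Step 1 is fine and standard: $I_\lambda(\underline\nu)|_{S_n}\cong\Ind_{S_\lambda}^{S_n}(\sgn)$, and Young's rule plus $\sigma_\mu\otimes\sgn=\sigma_{\mu^t}$ shows that $\sigma_{\lambda^t}$ occurs there with multiplicity one and is dominance-maximal among the constituents. The gap is entirely in Step 2, and you have not closed it by either route. In the induction route, the property you need for the composition factors of $N$ concerns irreducible modules of the \emph{same} algebra $\bH_n$, so induction on $n$ gives you nothing about them; moreover, the dominance argument as written only yields $\lambda^t\trianglelefteq(\lambda')^t$, and upgrading this to equality requires knowing that $\sigma_{(\lambda')^t}$ actually \emph{occurs} in $L_{\lambda'}(\underline\nu')$ --- which is precisely the nonvanishing half of the theorem, for a module at the same rank. (Also, the contradiction you invoke is not the right one: two occurrences of $L_\lambda(\underline\nu)$ in the composition series of $I_\lambda(\underline\nu)$ do not contradict uniqueness of the irreducible quotient; the correct contradiction would be that the factor of $N$ isomorphic to $L_\lambda(\underline\nu)$ contains $\sigma_{\lambda^t}$ while the quotient copy does not, which is impossible for isomorphic modules --- but this still presupposes the identification $\lambda'=\lambda$, $\underline\nu'=\underline\nu$ that you have not justified.)

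The intertwining-operator route is the right framework (and closer to how such statements are actually proved), but the one sentence ``the nonvanishing of that product would immediately force\dots'' is exactly where the theorem lives, and it is not automatic. When you factor $\C A_\lambda$ into rank-one operators, the scalars in the $\sgn$-direction are of the form $\dfrac{\langle\al,\nu'\rangle-1}{\langle\al,\nu'\rangle}$ (up to normalization), and their numerators \emph{do} vanish at integral points inside the dominant range --- precisely when two Steinberg segments are linked, which is exactly when $I_\lambda(\underline\nu)$ is reducible and the statement has content. So the scalar by which $\C A_\lambda$ acts on the $\sigma_{\lambda^t}$-isotypic line is not a naive product of generic $\sgn$-factors, and establishing its nonvanishing for all dominant $\underline\nu$ requires a genuine computation (how $\sigma_{\lambda^t}$ restricts under the relevant rank-one subgroups and which factors actually contribute), not just bookkeeping of normalizations. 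As it stands, both of your strategies assert the key survival statement rather than prove it, so the proposal has a genuine gap; the honest fix is either to carry out that scalar computation or to cite \cite{Ro}, as the paper does.
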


\subsection{Speh modules} The building blocks of the unitary dual are
the Speh modules whose construction we review now.

The following lemma is well-known and elementary.
\begin{lemma} For every $c\in \bC$, there exists a surjective algebra homomorphism $\tau_c:
  \bH\to \bC[S_n]$ given by:
\begin{align*}
&w\mapsto w,\quad w\in S_n;\\
&\ep_k\mapsto s_{k,k+1}+s_{k,k+2}+\dots+s_{k,n}+c,\quad 1\le k<n;\\
&\ep_n\mapsto c.
\end{align*}
\end{lemma}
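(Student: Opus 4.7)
The plan is to invoke the presentation of $\bH$ and verify that the given assignment on generators respects each defining relation. Since the prescribed map restricts to the identity on $\bC[S_n]$, the Coxeter relations hold automatically, and surjectivity is immediate because every simple transposition lies in the image. Three families of defining relations remain to check: (i) $\ep_i\ep_j = \ep_j\ep_i$ for all $i,j$; (ii) the cross relation $s_{i,i+1}\ep_i - \ep_{i+1}s_{i,i+1} = 1$; and (iii) $s_{i,i+1}\ep_k = \ep_k s_{i,i+1}$ for $k\notin\{i,i+1\}$.

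For (ii) and (iii), the computations reduce to the single conjugation identity $s_{i,i+1}\, s_{i,j}\, s_{i,i+1} = s_{i+1,j}$ for transpositions with $j \ge i+2$. Setting $T_k := \tau_c(\ep_k)$, one has
\begin{align*}
s_{i,i+1}\, T_i &= s_{i,i+1}^2 + \sum_{j \ge i+2} s_{i,i+1}\, s_{i,j} + c\, s_{i,i+1} \\
&= 1 + \sum_{j \ge i+2} s_{i+1,j}\, s_{i,i+1} + c\, s_{i,i+1} = 1 + T_{i+1}\, s_{i,i+1},
\end{align*}
which gives (ii) (the case $i=n-1$ is the same, with the sum empty and $T_n = c$). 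For (iii), when $k < i$ the only transpositions in $T_k$ that fail to commute with $s_{i,i+1}$ individually are $s_{k,i}$ and $s_{k,i+1}$, but their sum commutes with $s_{i,i+1}$ by the same conjugation identity; when $k > i+1$, every transposition in $T_k$ involves only indices strictly greater than $i+1$ and commutes with $s_{i,i+1}$ termwise.

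For (i), writing $M_k := T_k - c = \sum_{j>k} s_{k,j}$ (with $M_n = 0$), I would verify $[M_i, M_j] = 0$ for $i < j$ by expanding the commutator and observing that the only nonvanishing summands come from pairs $(s_{i,k}, s_{j,l})$ sharing an index, namely $k = j$ or $k = l > j$. The cycle identity $s_{i,a}\, s_{a,b} = s_{a,b}\, s_{i,b}$ (both sides equal the $3$-cycle $i \mapsto a \mapsto b \mapsto i$) then pairs these two families and they cancel. This step, the ``reversed'' Jucys--Murphy commutativity, is the only mildly nontrivial point in the proof; the rest is symbolic manipulation via the conjugation identity above.
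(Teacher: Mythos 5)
Your verification is correct and follows essentially the same route as the paper: a direct check of the cross relations $s_{i,i+1}\ep_i-\ep_{i+1}s_{i,i+1}=1$ and $s_{i,i+1}\ep_k=\ep_k s_{i,i+1}$ ($k\neq i,i+1$) via conjugation of transpositions, with surjectivity immediate from the simple reflections. The only difference is that you also verify the pairwise commutativity of the images $\tau_c(\ep_k)$ (the Jucys--Murphy commutativity), which the paper leaves implicit as a well-known fact; this makes your write-up slightly more complete but not a different argument.
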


\begin{proof} We check the commutation relations. It is clear that if
  $k\neq i,i+1$, $s_{i,i+1}$ commutes with $\ep_k$, since $s_{i,i+1}$
  commutes with every $s_{j,n}$, $k\le j<n$, when $i+1<k$, and it
  commutes with $s_{k,i}+s_{k,i+1}$ and $s_{k,j},$ $j\neq i,i+1$, when
  $i>k.$

Next, $s_{i,i+1}\ep_i=1+\sum_{j>i+1} w_{(i,j,i+1)}+c s_{i,i+1}$ and
$\ep_{i+1}s_{i,i+1}=\sum_{j>i+1} w_{(i,j,i+1)}+c s_{i,i+1}$, where $w_{(i,j,i+1)}$
denotes the element of $S_n$ with cycle structure $(i,j,i+1).$ The
claim follows.
\end{proof}

For every partition $\lambda$ of $n$ and $c\in \bC$, define the
$\bH$-module $\tau^*_c(\lambda)$ obtained by pulling back
$\sigma_\lambda$ to $\bH$ via $\tau_c.$

Viewing $\lambda$ as a left justified Young diagram, define the
$c$-content of the $(i,j)$ box of $\lambda$ to be $c+(j-i)$, and the
$c$-content of $\lambda$ to be the set {of} $c$-contents of boxes. This is
best explained by an example. If $\lambda$ is the partition of
$(3,3,1)$ of $n=7$, the $0$-content is the Young tableau
$$
\begin{tableau}
:.{0}.{1}.{2} \\
:.{{-1}}.{0}.{1} \\
:.{-2}\\
\end{tableau}
$$
For the $c$-content, add $c$ to the entry in every box.

\begin{lemma}\label{content}
The central character of $\tau^*_c(\lambda)$ is the ($S_n$-orbit of
the) $c$-content of the partition $\lambda$.
\end{lemma}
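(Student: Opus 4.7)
The plan is to compute the central character via the action of $Z(\bH_n) = S(V_\bC)^{S_n} = \bC[\ep_1,\dots,\ep_n]^{S_n}$ on $\tau_c^*(\lambda)$. Since $\sigma_\lambda$ is irreducible over $\bC[S_n]$ and $\tau_c$ is surjective, the pullback $\tau_c^*(\lambda)$ is an irreducible $\bH_n$-module, so the center acts by scalars, and the central character (an $S_n$-orbit in $V_\bC^\vee$, equivalently a multiset of $n$ complex numbers) is completely determined by how the elementary symmetric polynomials $e_k(\ep_1,\dots,\ep_n)$ act.

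The key observation is that $\tau_c(\ep_k) = J'_k + c$, where $J'_k := \sum_{k<j\le n} s_{k,j}$ for $k<n$ and $J'_n := 0$ are the Jucys--Murphy elements attached to the reversed subgroup chain $S_{\{1,\dots,n\}} \supset S_{\{2,\dots,n\}} \supset \dots \supset S_{\{n\}}$. Conjugation by the longest element $w_0 \in S_n$ carries the ordinary Jucys--Murphy elements $J_k = \sum_{i<k} s_{i,k}$ to $w_0 J_k w_0^{-1} = J'_{n+1-k}$, so the classical Jucys--Murphy theory transfers directly: the $J'_k$ pairwise commute (as they must, since the $\ep_i$ do in $\bH_n$ and $\tau_c$ is an algebra map), and the symmetric polynomials $e_k(J'_1,\dots,J'_n)$ are central in $\bC[S_n]$. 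Moreover, on the irreducible $\sigma_\lambda$ there is a basis (a Young-type seminormal basis adapted to the reversed chain) in which all $J'_k$ act diagonally, and the tuple of eigenvalues on any such basis vector is a permutation of the content multiset $\{\,j-i : (i,j)\in\lambda\,\}$. Hence $e_k(J'_1,\dots,J'_n)$ acts on $\sigma_\lambda$ by $e_k\bigl(\{\,j-i : (i,j)\in\lambda\,\}\bigr)$.

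Combining these, the central element $e_k(\ep_1,\dots,\ep_n) \in Z(\bH_n)$ acts on $\tau_c^*(\lambda)$ by $e_k\bigl(\{\,c+(j-i) : (i,j)\in\lambda\,\}\bigr)$. Since the elementary symmetric functions in $n$ variables determine an unordered multiset of $n$ complex numbers, this exactly identifies the central character of $\tau_c^*(\lambda)$ with the $S_n$-orbit of the $c$-content of $\lambda$. The only load-bearing input is the classical Jucys theorem on the joint spectrum of the Jucys--Murphy elements on irreducible $S_n$-representations; translating it from the standard chain to the reversed chain is immediate via $w_0$-conjugation, and the remainder is a direct symmetric-function identification. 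An alternative, essentially equivalent, route is to compute the power-sum traces $\tr_{\sigma_\lambda}\!\bigl((J'_k+c)^m\bigr)$ and verify they match the power sums of the $c$-content; the argument above via the center and Jucys's theorem is cleaner and avoids case analysis.
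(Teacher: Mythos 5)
Your proof is correct and follows essentially the same route as the paper, which simply cites the known simultaneous eigenvalues of the Jucys--Murphy elements $s_{k,k+1}+\dots+s_{k,n}$ (Okounkov--Vershik, Theorem 5.8) to conclude that the central character is the $c$-content. Your added details (the $w_0$-conjugation relating the reversed chain to the standard one and the identification via symmetric functions in the center $S(V_\bC)^{S_n}$) just spell out what the paper leaves implicit.
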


\begin{proof}
This follows from the known values of the simultaneous eigenvalues of
the Jucys-Murphy elements $s_{k,k+1}+s_{k,k+2}+\dots+s_{k,n}$ used to
defined $\tau_c.$ See for example \cite[Theorem 5.8]{OV}.
\end{proof}

\begin{definition}
If $\lambda$ is a box partition, i.e.,
$\lambda=(\underbrace{m,m,\dots,m}_{d})$, for some $m,d$ such that
$n=md$, and $c=0$ when $m+d$ is even or $c=\frac 12$ when $m+d$ is
odd, call the module $\tau^*_c(\lambda)$ a Speh module, and denote it
by $a(m,d).$
\end{definition}

\begin{lemma}\label{spehclass}
In the notation of Theorem \ref{irrclass}, the Speh module $a(m,d)$ is
isomorphic to $L_{\lambda^t}(\frac{m-1}2,\frac{m-3}2,\dots,-\frac{m-1}2),$
where $\lambda^t=(\underbrace{d,d,\dots,d}_{m})$.
\end{lemma}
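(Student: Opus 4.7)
The plan is to identify the Langlands parameter of $a(m,d)$ by matching $S_n$-types and central characters. First, since $\tau_c:\bH_n\to\bC[S_n]$ is a surjective algebra homomorphism that restricts to the identity on $\bC[S_n]$, the pullback $\tau_c^\ast(\sigma_\lambda)$ is automatically irreducible as an $\bH_n$-module (any $\bH_n$-submodule is an $S_n$-submodule of the irreducible $\sigma_\lambda$), and its restriction to $S_n$ equals $\sigma_\lambda$. By Lemma~\ref{content}, its central character is the $S_n$-orbit of the $c$-content of the rectangular partition $\lambda=(\underbrace{m,\dots,m}_{d})$, namely the multiset $\{c+(j-i):1\le i\le d,\ 1\le j\le m\}$.

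Next I apply the Langlands classification. By Theorem~\ref{irrclass}, $a(m,d)\cong L_\mu(\underline{\nu}')$ for some composition $\mu$ of $n$ and some dominant $\underline{\nu}'$. Theorem~\ref{lwt} pins down the $S_n$-structure: $L_\mu(\underline{\nu}')$ contains $\sigma_{\mu^t}$ with multiplicity one. Since $a(m,d)$ is itself $\sigma_\lambda$ as an $S_n$-module, this forces $\mu^t=\lambda$, hence $\mu=\lambda^t=(\underbrace{d,\dots,d}_{m})$. Only the continuous parameter $\underline{\nu}'$ remains to be identified.

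To finish, I would compute the central character of the standard module $I_{\lambda^t}(\underline{\nu})$ for the proposed $\underline{\nu}=(\tfrac{m-1}{2},\tfrac{m-3}{2},\dots,-\tfrac{m-1}{2})$. Each Steinberg block $\St_d\otimes\bC_{\nu_k}$ contributes the multiset $-\rho_d^\vee+\nu_k(1,\dots,1)$, and combining the $m$ blocks yields $\{\tfrac{m-d}{2}+(s-k):1\le s\le d,\ 1\le k\le m\}$. Using the symmetry of the content multiset of $\lambda$ under $(i,j)\mapsto(d+1-i,m+1-j)$, which sends $j-i\mapsto (m-d)-(j-i)$, one checks that this multiset coincides with the $c$-content of $\lambda$ for the prescribed normalization of $c$. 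The uniqueness in Theorem~\ref{irrclass} then yields $a(m,d)\cong L_{\lambda^t}(\underline{\nu})$.

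The main bookkeeping obstacle is this central character calculation---specifically, tracking the constant shift so that the parity-dependent normalization $c\in\{0,\tfrac{1}{2}\}$ reconciles with the symmetric placement of $\underline{\nu}$ around zero. Once central characters agree, no further structural input is required: the irreducibility of $a(m,d)$ follows trivially from the surjectivity of $\tau_c$, so that neither an explicit intertwiner from $I_{\lambda^t}(\underline{\nu})$ nor a composition-series argument is needed; the identification is forced by Langlands uniqueness together with Theorem~\ref{lwt}.
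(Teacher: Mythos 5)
Your route is the paper's own: the printed proof is literally ``immediate from Theorem \ref{irrclass}, Theorem \ref{lwt} and Lemma \ref{content}'', and your steps (irreducibility of $\tau_c^*(\sigma_\lambda)$, forcing $\mu^t=\lambda$ from the multiplicity-one occurrence of $\sigma_{\mu^t}$, then matching central characters to pin down the continuous parameter) are exactly that argument spelled out. The one place where your write-up asserts something that does not literally hold is the deferred check that the central character of $I_{\lambda^t}(\underline\nu)$, $\underline\nu=(\frac{m-1}2,\dots,-\frac{m-1}2)$, ``coincides with the $c$-content of $\lambda$ for the prescribed normalization of $c$''. Your own computation gives the multiset $\{\frac{m-d}2+(s-k)\}=\{\frac{d-m}2+(j-i)\}$, which is symmetric about $0$, whereas the $c$-content of the $d\times m$ rectangle is $\{c+(j-i)\}$, symmetric about $c+\frac{m-d}2$; the two agree if and only if $c=\frac{d-m}2$, not for the prescription $c\in\{0,\frac12\}$. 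For instance $m=3$, $d=1$, $c=0$ gives central character $\{2,1,0\}$ for $a(3,1)$ versus $\{1,0,-1\}$ for $L_{(1,1,1)}(1,0,-1)$. So the isomorphism holds only after twisting by the real character of the $gl(1)$-center by which $\ep_1+\dots+\ep_n$ acts, equivalently after replacing $c$ by $\frac{d-m}2$ (which has the stated parity). This imprecision is already present in the paper (Lemma \ref{charspeh} has the same feature), so it is not a flaw of your strategy, but the ``one checks'' step should be corrected or stated up to a central twist rather than asserted.

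A second, smaller point: after matching central characters you appeal to ``Langlands uniqueness'', but the uniqueness in Theorem \ref{irrclass} is uniqueness of the irreducible quotient of a given standard module; it does not by itself identify $a(m,d)$ with $L_{\lambda^t}(\underline\nu)$ just because the central characters and the lowest $S_n$-type agree. What you actually need is that, within the family $L_{\lambda^t}(\underline\nu')$, the central character determines $\underline\nu'$: equivalently, the content multiset of the $d\times m$ rectangle admits a unique decomposition into $m$ strings of $d$ consecutive values (the minimal entry has multiplicity one, so its string is forced; strip it and induct), or one invokes the standard uniqueness of Langlands data, which the paper does not state. With that one line added and the normalization of $c$ repaired as above, your proof is complete and coincides with the paper's.
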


\begin{proof}
This is immediate from Theorem \ref{irrclass}, Theorem \ref{lwt} and
Lemma \ref{content}.
\end{proof}

\subsection{The unitary dual}
The classification of irreducible $\bH_n$-modules which admit a nondegenerate
invariant hermitian form is a particular case of the classical result
of \cite{KZ}, as formulated in the Hecke algebra setting by \cite{BM}.

If $\lambda=(n_1,\dots,n_r)$ is a composition of $n$, let
$R_\lambda\subset R$
denote the root subsystem of the Levi subalgebra
$gl(n_1)\oplus\dots\oplus gl(n_r)\subset gl(n).$ If $w\in
S_n$ has the property that $wR_\lambda^+=R_\lambda^+,$ then $w$ gives
rise to an algebra automorphism of $\bH_\lambda$, and therefore $w$
acts on the set of irreducible $\bH_\lambda$-modules.

\begin{theorem}\label{herm} Let $\lambda=(n_1,\dots,n_r)$ be a composition of $n$,
  and $\underline\nu=(\nu_1,\dots,\nu_r)$ be a dominant $r$-tuple of
  complex numbers in the sense of (\ref{dom}).
In the notation of Theorem \ref{irrclass}, $L_\lambda(\underline\nu)$
is hermitian if and only if there exists $w\in S_n$ such that $w
R_\lambda^+=R_\lambda^+$
and \begin{equation}\label{e:herm}
w((\St\otimes\bC_{\nu_1})\boxtimes\dots\boxtimes(\St\otimes\bC_{\nu_r}))=(\St\otimes\bC_{-\overline\nu_1})\boxtimes\dots\boxtimes(\St\otimes\bC_{-\overline\nu_r}),
\end{equation}
as $\bH_\lambda$-modules.
\end{theorem}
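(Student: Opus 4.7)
The plan is to adapt the classical Knapp--Zuckerman criterion for Hermiticity of a Langlands quotient, as developed in the graded Hecke algebra setting of \cite{BM}. An irreducible module $L_\lambda(\underline\nu)$ is Hermitian if and only if it is isomorphic to its Hermitian dual $L_\lambda(\underline\nu)^h$, the latter formed using the $*$-operation (\ref{eq:tomdef}); so the problem reduces to computing this dual explicitly in Langlands form and invoking the uniqueness part of Theorem \ref{irrclass}.

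First I would handle a single Steinberg factor. A short calculation using $\omega^* = -\omega + \sum_{\beta>0}(\beta,\omega)s_\beta$, the fact that $s_\beta$ acts by $-1$ on $\St$, and that $\St$ has the unique $S(V_\bC)$-weight $-\rho^\vee$, shows that as $\bH_{n_i}$-modules
$$(\St \otimes \bC_{\nu_i})^h \;\cong\; \St \otimes \bC_{-\overline{\nu_i}}.$$

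Second, I would verify that $(-)^h$ commutes with parabolic induction, so that
$$I_\lambda(\underline\nu)^h \;\cong\; \bH_n \otimes_{\bH_\lambda} \bigl((\St \otimes \bC_{-\overline{\nu_1}}) \boxtimes \cdots \boxtimes (\St \otimes \bC_{-\overline{\nu_r}})\bigr).$$
This right-hand side is usually not in standard form because $(-\overline{\nu_1},\dots,-\overline{\nu_r})$ is antidominant rather than dominant; reversing the order of the blocks gives a genuine standard module whose unique irreducible quotient, by the usual long intertwining operator argument, is $L_\lambda(\underline\nu)^h$. Explicitly, writing $\lambda^{\mathrm{op}} = (n_r,\dots,n_1)$ and $\underline{\nu}^{\mathrm{op}} = (-\overline{\nu_r},\dots,-\overline{\nu_1})$, we get $L_\lambda(\underline\nu)^h \cong L_{\lambda^{\mathrm{op}}}(\underline{\nu}^{\mathrm{op}})$. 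Uniqueness in Theorem \ref{irrclass} then says that $L_\lambda(\underline\nu) \cong L_{\lambda^{\mathrm{op}}}(\underline{\nu}^{\mathrm{op}})$ if and only if the two inducing data are conjugate by some $w \in S_n$ normalizing (i.e., permuting the blocks of) $\bH_\lambda$; this is exactly the requirement $w R_\lambda^+ = R_\lambda^+$ together with (\ref{e:herm}).

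The main technical subtlety --- and what I expect to be the principal obstacle --- is the bookkeeping when several $n_i$ coincide and the corresponding $\nu_i$ share real parts: there the standard module and its Langlands quotient are well-defined only up to a permutation of equivalent blocks, and one must verify that this ambiguity can always be absorbed into the element $w \in N_{S_n}(W_\lambda)$, rather than requiring an arbitrary element of $S_n$. Once this normalizer argument is in place, both directions of the biconditional in the theorem follow directly from Theorem \ref{irrclass} combined with the two explicit computations above.
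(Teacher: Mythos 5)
The paper does not actually prove this statement: Theorem \ref{herm} is quoted as a special case of the Knapp--Zuckerman criterion \cite{KZ} in the Hecke algebra formulation of \cite{BM}, so there is no in-paper argument to compare against. Your sketch is precisely the standard argument underlying that citation (compute the Hermitian dual of the inducing data, pass the dual through parabolic induction, renormalize to dominant Langlands form, and invoke uniqueness of Langlands parameters), and it is sound in outline. Two points deserve explicit attention if you were to write it out. First, the step ``$(-)^h$ commutes with parabolic induction'' is the genuinely nontrivial content here and cannot be waved through: the $*$-operation (\ref{eq:tomdef}) of $\bH_n$ restricted to $\bH_\lambda$ is \emph{not} the intrinsic $*$-operation of $\bH_\lambda$, since $\omega^*=-\omega+\sum_{\beta>0}(\beta,\omega)s_\beta$ involves all positive roots of $R$, not just those in $R_\lambda$; controlling the extra terms $\sum_{\beta\in R^+\setminus R^+_\lambda}(\beta,\omega)s_\beta$ is exactly what the $\wti\omega$-type elements and the analysis in \cite{BM} (and \cite{BCT}) are designed for, and your one-block computation $(\St\otimes\bC_{\nu_i})^h\cong\St\otimes\bC_{-\overline{\nu_i}}$ must likewise be done with respect to the correct (induced) star, not the internal one of $\bH_{n_i}$. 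Second, Theorem \ref{irrclass} as stated gives only existence of the Langlands quotient and exhaustion; the disjointness statement you use (that $L_\lambda(\underline\nu)\cong L_{\lambda'}(\underline\nu')$ forces the inducing data to be conjugate under an element normalizing the parabolic, including in the presence of repeated blocks and equal real parts) is an additional ingredient of the Bernstein--Zelevinsky/Langlands classification that you should cite or prove; once it is in place, your normalizer bookkeeping in the last paragraph closes both directions of the equivalence.
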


\begin{corollary}
Every Speh module $a(m,d)$ is a unitary $\bH_n$-module.
\end{corollary}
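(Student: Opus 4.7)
The plan is two-fold: first establish that $a(m,d)$ is Hermitian via Theorem~\ref{herm}, and then prove positivity of the Hermitian form by a scaling deformation to a tempered module.

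For hermiticity, I would use Lemma~\ref{spehclass} to identify $a(m,d) = L_{\lambda^t}(\underline\nu_0)$ with composition $\lambda^t = (d^m)$ consisting of $m$ equal blocks of size $d$, and parameters $\underline\nu_0 = (\tfrac{m-1}{2}, \tfrac{m-3}{2}, \ldots, -\tfrac{m-1}{2})$. These parameters are real and palindromic: $\nu_i = -\nu_{m+1-i} = -\overline{\nu_i}$. Because all blocks have the same size $d$, the block-reversing permutation $w \in S_n$ lies in the stabilizer of $R_{\lambda^t}^+$, and its action on the Langlands data carries $\St \otimes \bC_{\nu_i}$ to $\St \otimes \bC_{\nu_{m+1-i}} = \St \otimes \bC_{-\overline{\nu_i}}$. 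Condition (\ref{e:herm}) of Theorem~\ref{herm} is therefore satisfied, and $a(m,d)$ is Hermitian.

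For unitarity, I would deform along the scaling family
\[
L(t) := L_{\lambda^t}(t \underline\nu_0), \qquad t \in [0,1].
\]
The same permutation $w$ shows that $L(t)$ is Hermitian for every $t \in [0,1]$. At the endpoint $t=0$, the induced module $I_{\lambda^t}(0) = \bH \otimes_{\bH_{\lambda^t}} \St^{\boxtimes m}$ has all its exponents equal to zero; it is tempered and hence unitary, so the invariant form is positive definite there. Standard signature theory for Hermitian families shows that the signature of the form on $L(t)$ is locally constant except at values of $t$ where $L(t)$ becomes reducible or the form degenerates. The endpoint $t=1$ yields $a(m,d)$, so it remains to rule out a signature change on the open interval.

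The main obstacle is this signature-across-walls analysis. The cleanest resolution is to invoke the Barbasch--Moy equivalence \cite{BM} between $\bH_n$-modules with real central character and Iwahori-spherical representations of $GL(md, F)$ for a non-archimedean local field $F$; under this equivalence $a(m,d)$ corresponds to the classical Speh representation $U(\St_d, m)$, whose unitarity is part of Tadi\'c's classification of the unitary dual of $GL_n$. Alternatively, a direct approach uses that $a(m,d)$ equals the single irreducible $\sigma_{(m^d)}$ as an $S_n$-module (by construction as $\tau_c^*(\sigma_{(m^d)})$): combined with Theorem~\ref{lwt} and the palindromic symmetry of $\underline\nu_0$, one verifies at each candidate reducibility point that the constituents which detach from $L(t)$ do not carry the $\sigma_{(m^d)}$-isotypic part of the form, so the surviving piece inherits positivity from $t=0$ without a sign flip.
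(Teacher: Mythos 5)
Your hermiticity step is the same as the paper's: the paper exhibits the element $w_0\,w_0(\lambda^t)$ (which, for equal blocks, is your block-reversing permutation up to a factor in the Levi's Weyl group) and checks condition (\ref{e:herm}) of Theorem \ref{herm} using Lemma \ref{spehclass}. Where you diverge is the positivity step, and here the paper is much shorter than either of your routes: since $a(m,d)=\tau_c^*(\sigma_{(m^d)})$ is \emph{irreducible as an $S_n$-module}, and since any invariant Hermitian form on an $\bH_n$-module is in particular $S_n$-invariant (because $w^*=w^{-1}$), Schur's lemma forces the form to be a nonzero real multiple of the positive definite $S_n$-invariant form on $\sigma_{(m^d)}$; after rescaling it is positive definite, and unitarity follows with no deformation, no wall-crossing, and no external input. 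You actually had this key fact in hand — you invoke the $W$-irreducibility of $a(m,d)$ — but you used it only to track the $\sigma_{(m^d)}$-isotypic signature along the family $L_{\lambda^t}(t\underline\nu_0)$, not noticing that it closes the argument outright at $t=1$. As for your two completions: the citation route via the unitarity-preserving Hecke algebra correspondence of \cite{BM} and Tadi\'c's classification \cite{T} is logically sound (the central character is real and the module is Iwahori-spherical on the group side), but it imports the deep $p$-adic theorem that the paper only quotes later, for the full unitary dual, and which one would prefer not to use for a single Speh module; your direct deformation route is salvageable by the multiplicity-one lowest-$W$-type tracking you sketch (Theorem \ref{lwt} gives multiplicity one in the standard module, so the form on that isotypic line never vanishes and keeps its sign from the tempered point $t=0$), but as written it leaves the signature analysis at reducibility points as an acknowledged gap — a gap the one-line irreducibility argument renders moot.
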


\begin{proof}
Let $w_0$ denote the
longest Weyl group element in $S_n$ and $w_0(\lambda)$ the longest
Weyl group element in $S_{n_1}\times\dots\times S_{n_r}.$
 Using Lemma \ref{spehclass}, we see now that every Speh
module $a(m,d)$ is hermitian since the Weyl group element $w_0 w_0(\lambda^t)$
satifies condition (\ref{e:herm}) in this case. 

Since in addition $a(m,d)$ is irreducible as an
$S_n$-module, it is in fact unitary.
\end{proof}

The classification of the unitary dual of $\bH_n$ is also well-known
(see \cite{T} for the classification of the unitary dual for $GL(n,\bQ_p)$).

The building blocks are the Speh modules defined before. First,
every Speh module $a(m,d)$ can be tensored with a unitary character
$\bC_{y}$, $y\in\sqrt{-1}\bR$ by which the central element
$\ep_1+\dots+\ep_n$ of $\bH_n$ acts. We denote the resulting (unitary)
irreducible module by $a_y(m,d).$ 

Next, we consider induced complementary series representations of the form 
\begin{equation}\label{deform}
\pi(a_y(m,d),\nu)=\bH_{2k}\otimes_{\bH_k\times\bH_k}(a_y(m,d)\otimes
\bC_\nu)\boxtimes (a_y(m,d)\otimes\bC_{-\nu}),\quad 0<\nu<\frac 12;
\end{equation}
in this notation, it is implicit that $k=md$.
An easy deformation argument shows that all $\pi(a_y(m,d))$ are
irreducible unitary $\bH_{2k}$-modules.

\begin{theorem}[\cite{T}]\label{unitdual} 

\begin{enumerate}
\item[(a)] Let $\lambda=(n_1,\dots,n_r)$ be a composition of $n$. If every
  $\pi_1,\dots,\pi_r$ is either a Speh module of the form $a_y(m,d)$
  or an induced complementary series module of the form
  $\pi(a_y(m,d),\nu)$ as in (\ref{deform}), then the induced module
\begin{equation}\label{tadicform}
\bH\otimes_{\bH_\lambda}(\pi_1\boxtimes\dots\boxtimes\pi_r)
\end{equation}
is irreducible and unitary. Moreover, two such modules are isomorphic
if and only if one is obtained from the other one by permuting the
factors.
\item[(b)] Every unitary $\bH_n$-module is of the form (\ref{tadicform}).
\end{enumerate}

\end{theorem}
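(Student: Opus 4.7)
The plan follows Tadic's classification strategy adapted to the graded Hecke algebra $\bH_n$. There are three things to check: the listed modules in (a) are unitary, they are irreducible, and the family in (a) exhausts the unitary dual up to the stated equivalence.

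For unitarity and irreducibility in (a), the Speh modules $a(m,d)$ are unitary by the preceding corollary, and the twist $a_y(m,d)$ remains unitary because the action of $y\in\sqrt{-1}\bR$ on the central element $\ep_1+\dots+\ep_n$ is by a unitary character. For the complementary series $\pi(a_y(m,d),\nu)$ defined in (\ref{deform}), I would run a standard deformation argument: at $\nu=0$ the induced module is parabolically induced from a Hermitian module on a Levi, hence Hermitian, and in fact unitary because unitary parabolic induction preserves unitarity; for $\nu>0$ the invariant Hermitian form varies continuously, and its signature can change only when crossing a reducibility hyperplane of the induced family. For type $A$ Speh segments the only reducibility point in $[0,\infty)$ occurs at $\nu=\frac12$, so the signature stays positive on $(0,\frac12)$. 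The full induced module (\ref{tadicform}) then inherits unitarity from unitary parabolic induction, and irreducibility reduces to a combinatorial check (in the spirit of Zelevinsky's segment criterion) that no two factors $\pi_i,\pi_j$ are ``linked,'' which one reads off from the constraint $\nu\in(0,\frac12)$ keeping the central characters of the Speh constituents in general position.

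For exhaustion (b), the main obstacle, I would proceed by induction on $n$ combined with signature tracking. Given an irreducible Hermitian $L_\lambda(\underline\nu)$, use Theorem \ref{herm} to place $\underline\nu$ in a normal form and then deform it along a path terminating either in a tempered module on a Levi or in a lower-rank unitary module already classified by induction. By the Knapp--Stein--Vogan signature theory formulated for Hecke algebras in \cite{BM}, the signature of the invariant form can change only when crossing a reducibility hyperplane, where the jump is governed by residues of intertwining operators restricted to specific $W$-isotypic subspaces. A combinatorial case analysis, organized by Zelevinsky segments, then shows that the only deformation paths along which the signature stays positive are exactly those traced out by complementary series of the form $\pi(a_y(m,d),\nu)$. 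As a complementary tool tailored to this paper's viewpoint, one could try to rule out non-unitary candidates using the Dirac inequality (\ref{eq:dcriterion}): compute $D^2$ on specific $\wti W$-isotypic components of $X\otimes S^\ep$ via Theorem \ref{t:dirac} and exhibit a component on which the resulting scalar is strictly negative for parameters outside the list of (a).

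Uniqueness up to permutation of factors follows from the Langlands classification (Theorem \ref{irrclass}): the multiset $\{\pi_i\}$ determines the Langlands data $(\lambda,\underline\nu)$ of the induced module (\ref{tadicform}) uniquely once reordered to satisfy the dominance condition (\ref{dom}), while permuting commuting factors produces isomorphic inductions. The hard part of the whole proof is squarely step (b): for every Hermitian parameter not of the form (\ref{tadicform}) one must exhibit a concrete vector on which the invariant form is negative. Doing this uniformly requires either a global signature-tracking scheme with explicit residues of intertwining operators, or an equally explicit analysis of the Dirac operator on well-chosen $\wti W$-isotypic components.
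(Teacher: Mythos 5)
The paper offers no proof of this theorem at all: it is quoted from Tadi\'c's classification of the unitary dual of $GL(n,\bQ_p)$ (\cite{T}), transported to $\bH_n$ via the Hecke-algebra formalism of \cite{BM}, so there is no internal argument to compare yours against. Judged on its own terms, your proposal is a reasonable summary of Tadi\'c's strategy, but it is a plan rather than a proof, and the gaps sit exactly where you admit they do. For exhaustion (b), the assertion that ``a combinatorial case analysis, organized by Zelevinsky segments, then shows'' that the only signature-positive deformation paths are the complementary series of (\ref{deform}) \emph{is} the theorem; you give no mechanism for producing, for an arbitrary Hermitian parameter outside the list, a vector on which the form is negative. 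The Dirac-inequality fallback cannot close this gap either: (\ref{eq:dcriterion}) is only a necessary condition for unitarity, satisfied by many non-unitary modules (the paper itself only uses the Dirac operator in the converse direction, Proposition \ref{criterion}), so it cannot uniformly eliminate the non-unitary Hermitian parameters.

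There are also two substantive inaccuracies in part (a). First, irreducibility of (\ref{tadicform}) is not ``a combinatorial check in the spirit of Zelevinsky's segment criterion'': Speh modules are not segment modules, and the irreducibility of products of Speh modules (and of their complementary-series deformations) rests on Bernstein's theorem that parabolic induction from irreducible unitary representations is irreducible for $GL(n)$ --- a deep external input that must be invoked, not rederived by linkage bookkeeping. Second, your deformation argument for $\pi(a_y(m,d),\nu)$ presupposes knowing that the induced family has no reducibility point in $(0,\tfrac12)$; that is true but is itself a nontrivial computation (via the Zelevinsky/Kazhdan--Lusztig combinatorics or explicit intertwining operators), not something that can be taken for granted. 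The uniqueness-up-to-permutation statement via the Langlands data of Theorem \ref{irrclass} is essentially fine, but as a whole the proposal should be read as an outline of Tadi\'c's published proof, with its two hardest ingredients (Bernstein irreducibility and the exhaustion/signature analysis) still to be supplied.
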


\subsection{Nilpotent orbits in $sl(n)$}
The classification of nilpotent orbits for $sl(n)$ is well-known. Let
$P(n)$ denote the set of all (decreasing) partitions of $n$ and let $DP(n)$ be the set of
 partitions with
 distinct sizes. The
Jordan canonical form gives a bijection between the set of nilpotent
orbits of $sl(n)$ and $P(n)$. If $(e_\lambda,h_\lambda,f_\lambda)$ is a Lie triple, where the
nilpotent element $e_\lambda$ is the Jordan form given by the
partition $\lambda=(n_1,n_2,\dots,n_r),$ $n_1\ge n_2\ge\dots\ge
n_r>0$, then, using the
identification $\fh=\bC^n$, the middle element $h_\lambda$ can be
chosen to have coordinates
\begin{equation}\label{hlam}
h_\lambda=\left(\frac{n_1-1}2,\dots,-\frac{n_1-1}2;\dots;\frac{n_r-1}2,\dots,-\frac{n_r-1}2\right).
\end{equation}
If we write $\lambda$ as
$\lambda=(\underbrace{n_1',\dots,n_1'}_{k_1},\underbrace{n_2',\dots,n_2'}_{k_2},\dots,\underbrace{n_l',\dots,n_l'}_{k_l}),$
with $n_1'>n_2'>\dots>n_l'>0$, then the centralizer in $gl(n)$ of the
triple $(e_\lambda,h_\lambda,f_\lambda)$ is $gl(k_1)\oplus
gl(k_2)\oplus\dots\oplus gl(k_l).$ In particular, the centralizer in $sl(n)$ is a
toral subalgebra if and only if $\lambda\in DP(n)$. Thus, we have a
natural bijection $\C T_0(SL(n))\leftrightarrow DP(n).$
For $\lambda\in P(n)$, ($\C T_0$ defined in (\ref{eq:tzero}) )
viewed as a left justified Young tableau, define
\begin{equation}\label{hooks}
\text{hook}(\lambda)
\end{equation}
to be the partition obtained by taking the hooks of $\lambda.$ For
example, if $\lambda=(3,3,1),$ then $\text{hook}(\lambda)=(5,2).$ It
is clear that $\text{hook}(\lambda)\in DP(n).$
 
We will need the following reformulation for the central character of
a Speh module.
\begin{lemma}\label{charspeh}
The central character of a Speh module $a(m,d)$ is the ($S_n$-orbit
of) $h_{\lambda'}$ (see (\ref{hlam})), where $\lambda'$ is the
partition
$$\lambda'=\text{hook}(\underbrace{m,m,\dots,m}_d)=(m+d-1,m+d-3,\dots, |m-d|+1).$$
\end{lemma}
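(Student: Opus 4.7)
The plan is to reduce the assertion to a purely combinatorial identity via Lemma~\ref{content} and then verify it by a hook decomposition of the rectangular Young diagram. Indeed, Lemma~\ref{content} identifies the central character of $a(m,d)=\tau_c^*(\sigma_{(m^d)})$ with the $S_n$-orbit of the $c$-content of $\lambda=(m^d)$, so it suffices to establish the multiset equality
\[
\{c+(j-i):1\le i\le d,\ 1\le j\le m\}=h_{\mathrm{hook}(\lambda)}.
\]
The first step is to partition the $d\times m$ rectangle into its principal (diagonal) hooks $H_1,\dots,H_{\min(m,d)}$, where $H_i$ consists of the arm $(i,i),(i,i+1),\dots,(i,m)$ together with the leg $(i+1,i),\dots,(d,i)$. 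Each $H_i$ has cardinality $n_i':=(m-i)+(d-i)+1=m+d-2i+1$, so the tuple $(n_1',\dots,n_{\min(m,d)}')$ coincides with $\mathrm{hook}(\lambda)=(m+d-1,m+d-3,\dots,|m-d|+1)$.

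Next I would record the $c$-contents of the boxes in each hook: the arm of $H_i$ contributes contents $0,1,\dots,m-i$ and the leg contributes $-1,-2,\dots,-(d-i)$, so after the shift by $c$ the $c$-contents of $H_i$ form the arithmetic progression $\bigl\{c-(d-i),\ c-(d-i)+1,\ \dots,\ c+(m-i)\bigr\}$ of step $1$ and length $n_i'$. The third step is to match this progression with the string in $h_{\mathrm{hook}(\lambda)}$ coming from the part $n_i'$, which by \eqref{hlam} is the symmetric tuple $\bigl(\tfrac{n_i'-1}{2},\tfrac{n_i'-3}{2},\dots,-\tfrac{n_i'-1}{2}\bigr)$ -- again an arithmetic progression of step $1$ and length $n_i'$, but centered at $0$. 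Since the two progressions share length and step, they coincide as multisets as soon as their midpoints agree.

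This midpoint check is the one point of the argument where anything nontrivial happens, and it is precisely where the prescription for $c$ enters. The parity convention ($c=0$ when $m+d$ is even, $c=\tfrac12$ when $m+d$ is odd) places all contents in the correct coset of $\bZ$, and one must verify that the midpoint $c+(m-d)/2$ of the rectangle's progression is matched to the center $0$ of the symmetric string attached to $n_i'$. Once this is done, concatenating the equalities over $i=1,\dots,\min(m,d)$ yields the claimed equality of multisets, and hence the lemma. I expect the only obstacle to be the careful bookkeeping of the normalization constant $c$ against the centering of the symmetric strings $h_{\lambda'}$; the rest is a direct unwinding of the combinatorics of a rectangular Young diagram and its hook decomposition.
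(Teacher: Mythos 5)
Your reduction via Lemma~\ref{content} and the decomposition of the $d\times m$ rectangle into its principal hooks is exactly the computation hiding behind the paper's one-line proof (``immediate from Lemma~\ref{content} and (\ref{hlam})''): hook $H_i$ has $m+d-2i+1$ boxes and its contents form a step-$1$ progression of that length, so the whole lemma comes down to the centering. But that is precisely the step you defer, and as you have set it up it does not close. The contents of $H_i$ run from $-(d-i)$ to $m-i$, so after the shift by $c$ the midpoint of the progression is $c+\tfrac{m-d}{2}$, independent of $i$, whereas the string of $h_{\lambda'}$ attached to the part $m+d-2i+1$ is centered at $0$. Hence the multiset of $c$-contents of $(m^d)$ equals $h_{\lambda'}$ if and only if $c=\tfrac{d-m}{2}$; for any other admissible $c$ it is $h_{\lambda'}$ shifted in every coordinate by the integer $c+\tfrac{m-d}{2}$. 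The parity rule in the definition of the Speh module ($c=0$ for $m+d$ even, $c=\tfrac12$ for $m+d$ odd) only records the coset of $c$ modulo $\bZ$, i.e.\ whether the central character lies in $\bZ^n$ or in $(\bZ+\tfrac12)^n$; taken literally it already gives the wrong midpoint for $a(3,1)$, whose $0$-contents are $\{0,1,2\}$ while $h_{\mathrm{hook}(3)}=h_{(3)}=(1,0,-1)$.

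So to complete your argument you must pin down the normalization rather than leave it as expected bookkeeping: either take $c=\tfrac{d-m}{2}$ (equivalently, normalize $a(m,d)$ so that the central element $\ep_1+\cdots+\ep_n$ acts by $0$, which is forced if $a(m,d)$ is to coincide with the Langlands datum of Lemma~\ref{spehclass}, whose central character has coordinate sum $0$), or else state the conclusion only up to a twist by a character of the one-dimensional center of $gl(n)$. With $c=\tfrac{d-m}{2}$ your midpoint check is immediate, and concatenating over $i=1,\dots,\min(m,d)$ finishes the proof; everything else in your outline is correct and is the same route the paper takes, just written out.
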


\begin{proof}
This is immediate from Lemma \ref{content} and (\ref{hlam}).
\end{proof}
\subsection{Irreducible $\wti S_n$-representations}
Denote the length of a partition $\lambda$ by $|\lambda|$. We say that $\lambda$ is even (resp. odd) if
 $n-|\lambda|$ is even (resp. odd). 
The first part of Theorem \ref{t:intro} for $\wti S_n$ is a classical result of Schur.

\begin{theorem}[Schur, \cite{St}] The irreducible $\wti
  S_n$-representations are parameterized by partitions in 
$DP(n)$ as follows:
\begin{enumerate}
\item[(i)] for every
  even $\lambda\in DP(n)$, there exists a unique
  $\wti\sigma_\lambda\in \widehat{\wti S_n}$;
\item[(ii)] for every odd
  $\lambda\in DP(n)$, there exist two associate $
  \wti\sigma_\lambda^+,\wti\sigma_\lambda^-\in \widehat{\wti
  S_n}$. 
\end{enumerate}
The dimension of $\wti\sigma_\lambda$
  or $\wti\sigma_\lambda^\pm$, where
  $\lambda=(\lambda_1,\dots,\lambda_m)\in DP(n)$, is
\begin{equation}
2^{[\frac{n-m}2]}\frac{n!}{\lambda_1!\dots\lambda_m!}\prod_{1\le
  i<j\le m}\frac{\lambda_i-\lambda_j}{\lambda_i+\lambda_j}.
\end{equation}
\end{theorem}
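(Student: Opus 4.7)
The plan is to prove this classical result of Schur through three stages: enumerate the $\wti S_n$-conjugacy classes that support genuine characters, construct one genuine class function $\zeta_\lambda$ per $\lambda \in DP(n)$ from Schur's $Q$-functions, and deduce irreducibility and the dimension formula from orthogonality.

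First, decompose $\bC[\wti S_n]=\bC[\wti S_n]^+\oplus\bC[\wti S_n]^-$ via the central idempotents $(1\pm z)/2$; the summand $\bC[\wti S_n]^+\cong \bC[S_n]$ gives the $P(n)$ non-genuine irreducibles, leaving only the genuine summand to analyze. A genuine character vanishes on any $\wti S_n$-class fixed by $z$-translation, so the number of genuine irreducibles equals the number of $S_n$-classes $C_\mu$ whose preimage $p^{-1}(C_\mu)$ splits into two $\wti S_n$-classes. Working with the Moore presentation and the explicit lift of a $k$-cycle to $\wti s_1\wti s_2\cdots \wti s_{k-1}$, a direct commutator computation establishes the classical splitting criterion: $p^{-1}(C_\mu)$ splits precisely when $\mu \in OP(n)\cup DP(n)$, where $OP(n)$ consists of partitions of $n$ with all odd parts. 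Combined with Euler's identity $|OP(n)|=|DP(n)|$ and a parity count, this gives the desired total $|DP(n)_{\mathrm{even}}|+2|DP(n)_{\mathrm{odd}}|$.

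Next, for each $\lambda\in DP(n)$ construct a genuine class function $\zeta_\lambda$ whose value on a split class above cycle type $\mu$ is (up to normalization by $\sqrt{|\wti C_\mu^\pm|}$) the coefficient of the power-sum $p_\mu$ in Schur's $Q$-function $Q_\lambda$. The fundamental orthogonality $\langle Q_\lambda,Q_{\lambda'}\rangle=2^{|\lambda|}\delta_{\lambda,\lambda'}$ translates, after careful bookkeeping of $\wti S_n$-class sizes, into $\langle \zeta_\lambda,\zeta_\lambda\rangle_{\wti S_n}=1$ when $\lambda$ is even and $=2$ when $\lambda$ is odd. Hence $\zeta_\lambda$ is already an irreducible genuine character in the even case, while in the odd case it splits as $\wti\sigma_\lambda^++\wti\sigma_\lambda^-$ with the two components interchanged by the $\sgn$-twist (because this twist negates character values on precisely those classes $\wti C_\mu^\pm$ where the cycle type $\mu$ satisfies $n-|\mu|$ odd). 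A count check against step one then shows these exhaust the genuine irreducibles.

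Finally, for the dimension formula, specialize $\mu=(1^n)$: $\zeta_\lambda(1)$ equals the coefficient of $p_1^n/n!$ in $Q_\lambda$, which by Schur's classical determinantal formula (equivalent to the shifted hook-length formula times $2^{\lfloor(n-|\lambda|)/2\rfloor}$) evaluates to precisely $2^{\lfloor(n-|\lambda|)/2\rfloor}\frac{n!}{\lambda_1!\cdots\lambda_{|\lambda|}!}\prod_{1\le i<j\le |\lambda|}\frac{\lambda_i-\lambda_j}{\lambda_i+\lambda_j}$. The principal obstacle is the construction step: exhibiting honest representations realizing the $\zeta_\lambda$ as characters, rather than just class functions with the correct orthogonality, requires either the nontrivial $Q$-function apparatus together with a Frobenius-type characteristic map for $\wti S_n$, or an explicit Clifford-algebra realization via modules built from a basic spin representation and shifted tableaux. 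Once either framework is in place, the class enumeration and the dimension evaluation are comparatively routine.
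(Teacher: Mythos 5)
The paper does not actually prove this statement: it is Schur's classical theorem, quoted from \cite{St}, so there is no internal argument to compare yours against; I am reviewing your outline on its own terms. Your overall route (count the split conjugacy classes, realize the genuine characters through Schur's $Q$-functions and a characteristic map, obtain the dimension by specializing at the identity) is indeed the classical one, and the dimension evaluation and the description of the pair $\wti\sigma_\lambda^\pm$ as $\sgn$-twists of each other are correct in substance.

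Two things, however, do not hold up as written. First, the splitting criterion you state is false: the preimage $p^{-1}(C_\mu)$ splits into two $\wti S_n$-classes exactly when $\mu$ has all parts odd, or $\mu\in DP(n)$ with $n-|\mu|$ odd (in the paper's notation $|\mu|$ is the number of parts); an even-sign distinct-part class whose parts are not all odd does not split. Concretely, for $n=6$ the class of type $(4,2)$ does not split, so your criterion ``$OP(6)\cup DP(6)$'' yields $7$ split classes, whereas the number of genuine irreducibles is $|DP(6)_{\mathrm{even}}|+2|DP(6)_{\mathrm{odd}}|=2+4=6$; the ``parity count'' you invoke therefore cannot be carried out from the criterion as you state it. Second, and more seriously, the heart of Schur's theorem is precisely the step you set aside as the ``principal obstacle'': showing that the class functions $\zeta_\lambda$ manufactured from the $Q_\lambda$ are characters of honest representations. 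Orthogonality of class functions by itself proves nothing of the sort; one must know that $\zeta_\lambda$ is a nonnegative integral combination of irreducible genuine characters, which classically requires the basic spin representation (a Clifford-algebra construction), induction from Young-type subgroups, and the Frobenius-type characteristic isometry identifying the resulting virtual genuine characters with the span of the $Q_\lambda$ (or else an explicit construction along the lines of Stembridge or Nazarov). Until that machinery is supplied, your argument shows only that \emph{if} such representations exist, then they are parameterized and have dimensions as claimed; as a proof of the theorem it has a genuine gap.
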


In order to simplify the
formulas below, we let
$\wti\sigma_\lambda^\ep$ denote any one of $\wti\sigma_\lambda$, if $\lambda$ is an even partition in $DP(n)$, or $\wti\sigma_\lambda^\pm$,
if $\lambda$ is an odd partition in $DP(n)$.

The decomposition of the tensor product of an $S_n$-type
$\sigma_\mu$ with a spin representation $\wti\sigma_{(n)}$ is
known. 
\begin{theorem}[{\cite[Theorem 9.3]{St},\cite[Chapter 3, (8.17)]{Mac}}]\label{glammu}
If $\lambda\neq (n)$, we have:
\begin{equation}\label{tensdecomp}
\dim\Hom_{\wti
  S_n}[\wti\sigma_\lambda,\sigma_\mu\otimes \wti\sigma_{(n)}]=\frac
1{\epsilon_\lambda\epsilon_{(n)}} 2^{\frac{|\lambda|-1}2} g_{\lambda,\mu},
\end{equation}
where $\ep_\lambda=1$ (resp. $\ep_\lambda=\sqrt 2$) if $\lambda$ is
even (resp. odd), and the integer $g_{\lambda,\mu}$ is the $(\lambda,\mu)$ entry in the inverse matrix $K(-1)^{-1}$, where $K(t)$ is the matrix of Kostka-Foulkes polynomials. In particular:
\begin{enumerate}
\item[(i)] $g_{\lambda,\lambda}=1$;
\item[(ii)] $g_{\lambda,\mu}=0$, unless $\lambda\ge\mu$ in the ordering of partitions. 
\end{enumerate}
\end{theorem}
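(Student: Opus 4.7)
The plan is to translate the multiplicity on the left-hand side into a coefficient in an explicit symmetric function identity via Schur's characteristic map for $\wti S_n$, and then to invoke the known inversion of the Kostka--Foulkes transition matrix at $t=-1$ to recognize that coefficient as $g_{\lambda,\mu}$. The entire content of the theorem is then accounted for once one keeps careful track of normalizations.

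First, I would set up Schur's Frobenius-type characteristic map. Restricting genuine characters of $\wti S_n$ to classes that split over $S_n$-classes of cycle type $\rho\in OP(n)$ (partitions with odd parts only) and decoding these class functions through the power sums $p_\rho$ identifies the genuine character ring with a subring of symmetric functions. Under the normalization in which this map is an isometry for the standard inner products, $\wti\sigma_\lambda$ corresponds to $\ep_\lambda Q_\lambda$, Schur's $Q$-function attached to the strict partition $\lambda$. In particular, the basic spin character $\wti\sigma_{(n)}$ maps to $\ep_{(n)} Q_{(n)}$. On the ordinary side, $\sigma_\mu$ corresponds to the Schur function $s_\mu$.

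Next, I would rewrite the character of $\sigma_\mu\otimes\wti\sigma_{(n)}$, evaluated on a split class with image of cycle type $\rho\in OP(n)$, as the product of $\chi_{\sigma_\mu}(\rho)$ and the value of $\chi_{\wti\sigma_{(n)}}$ on the corresponding split class. Under the characteristic map this becomes, up to the explicit power of $2$ that encodes the dimension of the basic spin module, the symmetric function product $s_\mu\cdot Q_{(n)}$ times the normalization $\ep_{(n)}$. Consequently, $\dim\Hom_{\wti S_n}[\wti\sigma_\lambda,\sigma_\mu\otimes\wti\sigma_{(n)}]$ equals, up to the prefactor $\tfrac{1}{\ep_\lambda\ep_{(n)}}2^{(|\lambda|-1)/2}$, the coefficient of $Q_\lambda$ in the expansion of $s_\mu$ in the $Q$-basis. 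The last ingredient is Macdonald's identity (Chapter III (8.17) in \cite{Mac}) expressing the transition matrix between the $s_\mu$ and $Q_\lambda$ bases as the inverse $K(-1)^{-1}$ of the Kostka--Foulkes matrix specialized at $t=-1$. This identifies the coefficient as $g_{\lambda,\mu}$ and completes the formula. Properties (i) and (ii) are then immediate from the unitriangularity of $K(t)$ in dominance order, since $K_{\lambda\lambda}(t)=1$ and $K_{\lambda\mu}(t)=0$ unless $\lambda\ge\mu$, and the same triangularity then holds for $K(-1)^{-1}$.

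The main obstacle I foresee is the bookkeeping of the many normalization factors: $\ep_\lambda=\sqrt 2$ when $\lambda$ is odd (reflecting that odd $\lambda$ indexes the pair of associate representations $\wti\sigma_\lambda^\pm$), the doubling of the inner product due to the central $\bZ/2\bZ$-extension, and the powers of $2$ recording the dimensions of the spin modules $S$ or $S^\pm$ versus their restriction to $\wti S_n$. A useful consistency check is the case $\lambda=\mu$ strict, where $g_{\lambda,\lambda}=1$ forces a specific multiplicity that can be verified directly from the known character values on elements projecting to the longest cycle; matching the two computations pins down the overall prefactor before one propagates to arbitrary $(\lambda,\mu)$ by the triangularity argument.
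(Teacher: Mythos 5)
The paper offers no proof of this statement: it is quoted from the cited sources (Stembridge, Theorem 9.3, and Macdonald, Ch.~III), and your plan --- Schur's characteristic map for genuine $\wti S_n$-characters combined with the specialization of the Kostka--Foulkes matrix at $t=-1$ --- is exactly the route taken there, so in outline you are on the right track. However, the pivotal middle step is wrong as literally written. The tensor product $\sigma_\mu\otimes\wti\sigma_{(n)}$ does \emph{not} correspond under the characteristic map to the ordinary symmetric function product $s_\mu\cdot Q_{(n)}$: that product has degree $2n$, and multiplication in $\Lambda$ encodes induction from a Young subgroup, not the internal (pointwise/Kronecker) tensor product. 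What your preceding sentence actually gives is the pointwise product of character values on split classes over $\rho\in OP(n)$; since the basic spin character there is a power of $2$ depending only on $\ell(\rho)$, the characteristic of $\sigma_\mu\otimes\wti\sigma_{(n)}$ is, up to an overall power of $2$, the image $\theta(s_\mu)$ of $s_\mu$ under the ring homomorphism $\theta(p_r)=2p_r$ for $r$ odd and $\theta(p_r)=0$ for $r$ even (equivalently, the \emph{internal} product $s_\mu * Q_{(n)}$). Relatedly, the phrase ``the coefficient of $Q_\lambda$ in the expansion of $s_\mu$ in the $Q$-basis'' is not meaningful: the $Q_\lambda$ with $\lambda$ strict span only the subring $\Gamma\subset\Lambda$, and $s_\mu$ generally does not lie in $\Gamma$; it is $\theta(s_\mu)\in\Gamma$ that one expands.

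The correct identification of the coefficient also runs in the opposite direction from the one you state: from $s_\mu=\sum_\lambda K_{\mu\lambda}(-1)\,P_\lambda(x;-1)$ one inverts to get $P_\lambda(x;-1)=\sum_\mu g_{\lambda\mu}\,s_\mu$ with $g=K(-1)^{-1}$, whence (orthonormality of Schur functions) the Hall pairing of $Q_\lambda=2^{\ell(\lambda)}P_\lambda(x;-1)$ with $s_\mu$ is $2^{\ell(\lambda)}g_{\lambda\mu}$; pairing this against $\theta(s_\mu)$ (using that $\theta$ is adjoint to the inclusion $\Gamma\subset\Lambda$ up to the expected power of $2$) produces the multiplicity, and the unitriangularity statements (i), (ii) then follow from those of $K(t)$ exactly as you say. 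Your closing remarks about the $\ep_\lambda$ factors, the associate pairs $\wti\sigma_\lambda^\pm$ (which contribute on the classes of type $\lambda\in DP(n)$, not just $OP(n)$), and the restriction of the spin module to $\wti S_n$ are the right bookkeeping concerns, and the consistency check at $\lambda=\mu$ strict is a sensible way to pin the normalization; but as submitted, the product/expansion step would fail and must be replaced by the internal-product (equivalently $\theta$) formulation before the argument goes through.
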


\begin{example}\label{tensorhook}
The integers $g_{\lambda,\mu}$ have also an explicit combinatorial description in terms of ``shifted tableaux'' of unshifted shape $\mu$ and content $\lambda$ satifying certain admissibility conditions (see \cite[Theorem 9.3]{St}). From this description, one may see for example that if $\lambda=\text{hook}(\mu)$, then $g_{\lambda,\mu}=1$ in (\ref{tensdecomp}).
\end{example}

\subsection{Nonzero cohomology} We are now in position to determine
the unitary modules of $\bH_n$ with nonzero Dirac cohomology.

We remark that since $gl(n)$ is not semisimple, the spin modules $S^\ep$ of
$C(V)$ ($V\cong \bC^n$) are not necessarily irreducible $\wti
S_n$-representations. More precisely, using (\ref{restspin}), we see that
$S^\pm|_{\wti S_n}=\wti\sigma_{(n)},$ when $n$ is odd, and $S|_{\wti S_n} =\wti\sigma_{(n)}^++\wti\sigma_{(n)}^-$, when $n$ is even. 

\begin{lemma}\label{ccdirac}
Assume $X$ is an irreducible $\bH_n$-module such that $H^D(X)\neq 0.$
Then the central character of $X$ is in the set $\{h_{\lambda}/2:
\lambda\in DP(n)\},$ where $h_\lambda$ is as in (\ref{hlam}).
\end{lemma}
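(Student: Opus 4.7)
The plan is to derive the lemma from Vogan's conjecture (Theorem \ref{t:hpv}) together with the explicit description of $\C T_0(SL(n))$ via Jordan form. Since $H^D_\ep(X)\neq 0$ for some spin module $S^\ep$, one may fix a genuine irreducible $\wti S_n$-type $\wti\sigma$ with $\Hom_{\wti S_n}(\wti\sigma,H^D_\ep(X))\neq 0$ and write $\Psi(\wti\sigma)=[(e,h,f)]\in \C T_0(SL(n))$. Theorem \ref{t:hpv} then forces the central character $\nu$ of $X$ to be $S_n$-conjugate to $h/2$. To identify $h$, recall that Jordan form puts nilpotent orbits in $sl(n)$ in bijection with $P(n)$, and that for $\lambda\in P(n)$ the centraliser of $(e_\lambda,h_\lambda,f_\lambda)$ in $gl(n)$ is the Levi subalgebra $gl(k_1)\oplus\cdots\oplus gl(k_l)$, where $k_1,\dots,k_l$ are the multiplicities of the distinct parts of $\lambda$; this is toral in $sl(n)$ precisely when every $k_i=1$, i.e.\ when $\lambda\in DP(n)$, as already noted in the discussion preceding the lemma. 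Hence $h=h_\lambda$ for some $\lambda\in DP(n)$ and $\nu=h_\lambda/2$ up to $S_n$-orbit, as required.

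The only subtlety is that $\bH_n$ is attached to $gl(n)$, whose root system does not span $V=\bR^n$: the $W$-fixed direction $V_c=\bR(\ep_1+\cdots+\ep_n)$ a priori allows $\nu$ to carry an extra scalar, whereas $h_\lambda\in V_0^\vee$ has coordinate sum zero. To rule this out, set $e_0=n^{-1/2}(\ep_1+\cdots+\ep_n)$, the unit vector in $V_c$. Since $(\beta,e_0)=0$ for every root $\beta$, one has $\wti e_0=e_0$ (in particular $e_0$ is central in $\bH_n$), and $e_0^2=-1$ in $C(V)$. A direct computation in $\bH_n\otimes C(V)$ using an orthonormal basis $\{\om_1,\dots,\om_{n-1},e_0\}$ of $V$ with $\om_i\in V_0$ then yields the anticommutator identity
\[
(1\otimes e_0)\,\C D+\C D\,(1\otimes e_0)=-2\,(e_0\otimes 1).
\]
Because $e_0\otimes 1$ commutes with $\C D$, it preserves $\ker \C D$; the identity above shows that its image on $\ker \C D$ lies in $\Ima \C D$. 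Hence $e_0\otimes 1$ acts as zero on $H^D_\ep(X)=\ker \C D/(\ker \C D\cap \Ima \C D)$, and irreducibility of $X$ forces the scalar by which $e_0$ acts on $X$ to vanish, i.e.\ $\sum_i\nu_i=0$, so that $\nu\in V_0^\vee$.

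This mild $gl(n)$-versus-$sl(n)$ reconciliation is the only real obstacle; with it noted, the lemma is immediate from Theorem \ref{t:hpv} applied to $\wti\sigma$ and the bijection $\C T_0(SL(n))\leftrightarrow DP(n)$ described in the first paragraph.
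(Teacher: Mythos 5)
Your first paragraph is exactly the paper's proof: the lemma is read off from Theorem \ref{t:hpv} together with the bijection $\C T_0(SL(n))\leftrightarrow DP(n)$ coming from the Jordan-form description of centralizers, so in that respect your argument and the paper's coincide. The extra content is your second paragraph, and it is a legitimate addition rather than a detour: since the root system of $gl(n)$ does not span $V=\bR^n$, Theorem \ref{t:hpv} (stated in the setting where $R$ spans $V$) does not by itself control the component of the central character along the $W$-fixed line, while the set $\{h_\lambda/2:\lambda\in DP(n)\}$ consists of trace-zero points; the paper silently absorbs this into the word ``reformulation'' (ultimately relying on the general result of \cite{BCT}), and it does use the trace-zero conclusion later, in Lemma \ref{prelimresults}(c), to exclude $a_y(m,d)$ with $y\neq 0$. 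Your computation is correct: with $e_0=n^{-1/2}(\ep_1+\cdots+\ep_n)$ one has $\wti e_0=e_0$, the anticommutator identity $(1\otimes e_0)\,\C D+\C D\,(1\otimes e_0)=-2\,(e_0\otimes 1)$ holds since $e_0$ is orthogonal to the span of the roots and $e_0^2=-1$ in $C(V)$, and since $e_0\otimes 1$ commutes with $\C D$ it maps $\ker D$ into $\ker D\cap\Ima D$, so the scalar by which the central element $e_0$ acts on the irreducible module $X$ must vanish when $H^D_\ep(X)\neq 0$. So your proof is correct and follows the paper's route, with a self-contained justification of the $gl(n)$-versus-$sl(n)$ point that the paper leaves implicit.
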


\begin{proof}
This is just a reformulation of Theorem \ref{t:hpv} in this particular case.
\end{proof}

As a consequence of (\ref{tensdecomp}), we obtain the following precise results for Dirac cohomology.

\begin{lemma}\label{prelimresults}
\item[(a)] A spherical module $L(\nu)$ has nonzero Dirac cohomology if and only if $\nu\in \{h_{\lambda}/2: 
\lambda\in DP(n)\},$ where $h_\lambda$ is as in (\ref{hlam}), and in this case $H^D_\ep(L(h_{(n)}/2))=S^\ep$, and if $\lambda\neq (n)$:
\begin{align*}
H^D_\ep(L(h_\lambda/2))&=2^{[(|\lambda|-1)/2]}\wti\sigma_\lambda, &\text{ if $n$ is odd and $\lambda$ is even};\\
&=2^{[(|\lambda|-1)/2]}(\wti\sigma_\lambda^++\wti\sigma_\lambda^-), &\text{ if $n$ is odd and $\lambda$ is odd};\\
&=2^{[(|\lambda|)/2-1]}(\wti\sigma_\lambda^\ep+\wti\sigma_\lambda^\ep\otimes\sgn), &\text{ if $n$ is even}.\\
\end{align*}
\item[(b)] Every Speh module $a(m,d)$ has nonzero Dirac cohomology. More precisely, 
$H^D_\ep(a(m,d))=2^{(d-1)/2}~(\wti\sigma_{(m+d-1,m+d-3,\dots,|m-d|+1)}^++\wti\sigma_{(m+d-1,m+d-3,\dots,|m-d|+1)}^-),$ if $d$  is odd and $m$ is even, $H^D_\ep(a(m,d))=2^{[(d-1)/2]}~\wti\sigma_{(m+d-1,m+d-3,\dots,|m-d|+1)}^\ep,$ if $d$ is odd and $m$ is odd,
or $H^D_\ep(a(m,d))=2^{[(d+1)/2]}~\wti\sigma_{(m+d-1,m+d-3,\dots,|m-d|+1)}^\ep,$ otherwise.
\item[(c)] Every complementary series induced module $\pi(a_y(m,d),\nu)$ as
  in (\ref{deform}) has zero Dirac cohomology.
\end{lemma}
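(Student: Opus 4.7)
All three parts hinge on the square formula $\C D^2=-\Omega\otimes 1+\frac14\Delta_{\wti W}(\Omega_{\wti W})$ of Theorem \ref{t:dirac}. Since the modules in question are unitary, $\ker D=\ker D^2=H^D_\ep(X)$. On the $\wti\sigma_\nu^\ep$-isotypic component of $X\otimes S^\ep$ ($\nu\in DP(n)$ by Schur's classification), $D^2$ acts by the scalar $\tfrac14(\langle h_\nu,h_\nu\rangle-\langle h_X,h_X\rangle)$, where $h_X/2$ is the central character and $\wti\sigma_\nu^\ep(\Omega_{\wti W})=\langle h_\nu,h_\nu\rangle$ by Theorem \ref{t:intro}(1). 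In type $A$ every Springer representation has trivial local system, the closure order on orbits matches the dominance order on partitions of $n$, and $\langle h_\nu,h_\nu\rangle=\tfrac1{12}(\sum_i\nu_i^3-n)$ is strictly Schur-convex.

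\smallskip

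\emph{Part (a).} The ``only if'' direction is Lemma \ref{ccdirac} together with Corollary \ref{HDsphunip}. For the explicit formula, I expand $L(h_\lambda/2)|_{S_n}=\sigma_\lambda+\sum_{\mu>\lambda}m_\mu\,\sigma_\mu$ via (\ref{bomac}) (using $A(e)=1$ in type $A$). Tensoring with $S^\ep$ and invoking Theorem \ref{glammu}(ii) ($g_{\nu,\mu}=0$ unless $\nu\ge\mu$), every $\wti\sigma_\nu^\ep$ appearing has $\nu\ge\lambda$ in dominance, hence by strict Schur-convexity of $\sum_i\nu_i^3$ only $\nu=\lambda$ lies in $\ker D^2$. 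The multiplicity of $\wti\sigma_\lambda^\ep$ is $\frac{1}{\ep_\lambda\ep_{(n)}}2^{(|\lambda|-1)/2}$ by Theorem \ref{glammu} with $g_{\lambda,\lambda}=1$; the three parity cases come from combining $\ep_\lambda,\ep_{(n)}\in\{1,\sqrt2\}$ with the decomposition of $S^\ep|_{\wti S_n}$ recorded after Definition \ref{d:dcoh}.

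\smallskip

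\emph{Part (b).} The Speh module $a(m,d)=\tau_c^*((m^d))$ equals $\sigma_{(m^d)}$ as an $S_n$-module, with central character $h_{\lambda'}/2$ for $\lambda'=\text{hook}((m^d))\in DP(n)$ (Lemma \ref{charspeh}), and is unitary. Theorem \ref{glammu}(ii) confines the $\wti\sigma_\nu^\ep$ appearing in $a(m,d)\otimes S^\ep$ to $\nu\in DP(n)$ with $\nu\ge(m^d)$, and unitarity forces $\langle h_\nu,h_\nu\rangle\ge\langle h_{\lambda'},h_{\lambda'}\rangle$ for every such $\nu$. Since $g_{\lambda',(m^d)}=1$ (Example \ref{tensorhook}), $\wti\sigma_{\lambda'}^\ep$ does appear and contributes to $\ker D^2$. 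The key combinatorial step is to verify that no other $\nu\in DP(n)$ with $\nu\ge(m^d)$ and $g_{\nu,(m^d)}\ne 0$ has $\sum_i\nu_i^3=\sum_i(\lambda'_i)^3$. Note that dominance alone does not force this: for example $(3,2,1)\ge(2,2,2)$ in $DP(6)$ with strictly smaller cubic sum $36<72=\sum(\lambda')^3$, so one must show $g_{(3,2,1),(2,2,2)}=0$ and similarly for the other ``low-Casimir'' strict partitions, either by using the shifted-tableau rule for $g_{\nu,(m^d)}$ or, cleaner, by invoking unitarity of $a(m,d)$ (which automatically rules out Casimirs below $\langle h_{\lambda'},h_{\lambda'}\rangle$). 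After this uniqueness is established, the multiplicity of $\wti\sigma_{\lambda'}^\ep$ in $\sigma_{(m^d)}\otimes S^\ep$ and the parity bookkeeping for $\ep_{\lambda'}$ and $\ep_{(n)}$ yield the three stated cases.

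\smallskip

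\emph{Part (c).} The central character of $\pi(a_y(m,d),\nu)$ is the $S_{2k}$-orbit of $(h_{\lambda'}/2+(y+\nu)\mathbf{1},\,h_{\lambda'}/2+(y-\nu)\mathbf{1})\in\bC^{2k}$, while any $h_\mu/2$ with $\mu\in DP(2k)$ has only real half-integer coordinates. A nonzero $y\in\sqrt{-1}\bR$ produces a nonreal coordinate, and $y=0$ with $\nu\in(0,1/2)$ produces non-half-integer real coordinates; in either case the central character is not a Weyl translate of any $h_\mu/2$, so Lemma \ref{ccdirac} gives $H^D(\pi(a_y(m,d),\nu))=0$. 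The main obstacle throughout is the uniqueness step in (b); parts (a) and (c) are direct applications of the master formula once this is cleared.
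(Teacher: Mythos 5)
Your parts (a) and (c) are correct and follow essentially the same route as the paper: (a) is the combination of (\ref{bomac}) with the unitriangularity of the coefficients $g_{\lambda,\mu}$ in Theorem \ref{glammu} (your strict Schur-convexity remark is just the explicit type $A$ form of the statement that the Casimir value strictly increases along the closure/dominance order), and (c) is exactly the central character obstruction of Lemma \ref{ccdirac}.

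The gap is in part (b). You correctly note that the dominance constraint $\nu\ge (m^d)$ coming from Theorem \ref{glammu}(ii) does not isolate $\lambda'=\text{hook}((m^d))$, and you reduce the exact formula to the claim that no other $\nu\in DP(n)$ with $g_{\nu,(m^d)}\neq 0$ has $\sum_i\nu_i^3=\sum_i(\lambda'_i)^3$. Neither of your two suggested fixes closes this: unitarity (the Dirac inequality) only excludes $\wti\sigma_\nu$ whose Casimir value is \emph{strictly smaller} than $\langle h_{\lambda'},h_{\lambda'}\rangle$, and the shifted-tableau computation is not carried out. The equal-Casimir case is a real issue for a scalar-only argument, since distinct strict partitions of the same $n$ with equal cube sums do exist (for instance $(12,8,5,1)$ and $(11,10,3,2)$ for $n=26$), so the value of $\wti\sigma(\Omega_{\wti W})$ alone can never single out $\lambda'$. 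The missing ingredient is Theorem \ref{t:hpv} (equivalently Lemma \ref{ccdirac}, or the formula (\ref{HDdecomp})): if $\wti\sigma_\nu$ occurs in $H^D_\ep(a(m,d))$ and $\Psi(\wti\sigma_\nu)=[(e_\nu,h_\nu,f_\nu)]$, then the central character equals $h_\nu/2$ as a $W$-orbit, not merely in length; since the central character of $a(m,d)$ is that of $\lambda'$, this forces $\nu=\lambda'$. With that in hand, unitarity gives $H^D_\ep(a(m,d))=\ker D^2$, which is then the full $\wti\sigma_{\lambda'}^\ep$-isotypic subspace of $\sigma_{(m^d)}\otimes S^\ep$, and the stated multiplicities follow from $g_{\lambda',(m^d)}=1$ (Example \ref{tensorhook}) together with the parity bookkeeping you describe. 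For comparison, the paper's printed proof of (b) only establishes the occurrence of $\wti\sigma_{\lambda'}$, via Lemma \ref{charspeh}, Example \ref{tensorhook} and Proposition \ref{criterion}; the precise formula is exactly what (\ref{HDdecomp}) supplies once Theorem \ref{t:hpv} has pinned down which $\wti W$-types can appear.
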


\begin{proof}
(a) This is immediate by (\ref{bomac}) and the upper unitriangular property of the numbers $g_{\lambda,\mu}$ in Theorem \ref{glammu}.

(b) By Lemma \ref{charspeh}, the central character of $a(m,d)$ is
$h_{\lambda'}$ where $\lambda'=\text{hook}(\lambda)\in DP(n).$ By
Example \ref{tensorhook}, the genuine $\wti S_n$-type
$\wti\sigma_{\lambda'}$ occurs with nonzero multiplicity in
$\sigma_{(m,m,\dots,m)}\otimes S.$ By construction, $a(m,d)$ is
isomorphic with $\sigma_{(m,m,\dots,m)}$ as
$S_n$-representations. This means that the hypothesis of Proposition
\ref{criterion} are satisfied, hence $\wti\sigma_{\lambda'}$ occurs in $H^D(a(m,d))$.

(c) This is immediate from Lemma \ref{ccdirac}, since $a_y(m,d)$,
$y\neq 0$ and $\pi(a_y(m,d),\nu)$, $0<\nu<\frac 12$ do not have the allowable
central characters.
\end{proof}

\begin{theorem}\label{main}
An irreducible unitary $\bH_n$-module has nonzero Dirac cohomology if
and only if it is isomorphic with an induced module
\begin{equation}\label{possible}
\begin{aligned}
&X=\bH_n\otimes_{\bH_{ev}\otimes
  \bH_{odd}}(\pi_{ev}\boxtimes\pi_{odd}),\quad \text{where}\\
&\bH_{ev}=\bH_{k_1}\times \bH_{k_2}\times\dots\times \bH_{k_\ell},\
\bH_{odd}=\bH_{k'_1}\times \bH_{k'_2}\times\dots\times
\bH_{k'_t},\quad \text{and}\\
&\pi_{ev}=a(m_1,d_1)\boxtimes a(m_2,d_2)\boxtimes \dots\boxtimes
  a(m_\ell,d_\ell),\ \pi_{odd}=a(m_1',d_1')\boxtimes a(m_2',d_2')\boxtimes \dots\boxtimes
  a(m_t',d_t'),\\
&m_i+d_i\equiv 0 (\text{mod }2),\ m_j'+d_j'\equiv 1 (\text{mod }2),
\end{aligned}
\end{equation}
$k_1+k_2+\dots+k_\ell+k_1'+k_2'+\dots+k_t'=n$,
where $a(m_i,d_i), a(m_j',d_j')$ are Speh modules for $\bH_{k_i},\bH_{k'_j}$ and such that the
following conditions are satisfied:
\begin{equation}\label{gaps}
\begin{aligned}
&m_1+d_1-1\ge |m_1-d_1|+1>m_2+d_2-1\ge
|m_2-d_2|+1>\dots>m_\ell+d_\ell-1;\\
&m_1'+d_1'-1\ge |m_1'-d_1'|+1>m_2'+d_2'-1\ge
|m_2'-d_2'|+1>\dots>m_t'+d_t'-1.\\
\end{aligned}
\end{equation}
\end{theorem}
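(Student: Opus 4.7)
The plan combines three inputs: the classification of the unitary dual of $\bH_n$ from Theorem \ref{unitdual}, the central-character constraint of Lemma \ref{ccdirac} (which, under the Jordan-form identification $\C T_0(SL(n))\leftrightarrow DP(n)$, pins the allowable central characters of modules with nonzero Dirac cohomology to a discrete family $\chi_\mu$ indexed by $\mu\in DP(n)$), and the single-Speh calculation in Lemma \ref{prelimresults}(b). The driving combinatorial observation is that $\chi_\mu$ is a real sum-zero vector whose coordinate multiset decomposes into blocks of consecutive values (in steps of $1$) of lengths equal to the parts of $\mu$.

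For necessity, write a unitary $X$ via Theorem \ref{unitdual}(b) with factors $\pi_i$ of type $a_{y_i}(m_i,d_i)$ or $\pi(a_{y_i}(m_i,d_i),\nu_i)$, $0<\nu_i<1/2$. A $\bC_{y_i}$-twist shifts the coordinates of the $i$-th block by the purely imaginary $y_i$, and reality of the allowable central character forces every $y_i=0$. A complementary-series factor $\pi(a(m,d),\nu)$ contributes two copies of the central-character block of $a(m,d)$ shifted by $\pm\nu$; fitting this into the block structure of some $\chi_\sigma$ requires pairing a $+\nu$-shifted length-$\mu_1$ block with a $-\nu$-shifted length-$\mu_2$ block into a single length-$(\mu_1+\mu_2)$ block of $\sigma$, which forces $\nu=\mu_1/2=\mu_2/2$ and so $\mu_1=\mu_2$, contradicting the distinctness of parts in $\text{hook}(m^d)\in DP$. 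Hence each $\pi_i$ is an honest Speh module $a(m_i,d_i)$, whose central character is indexed by $\lambda_i:=\text{hook}(m_i^{d_i})\in DP(m_id_i)$ (Lemma \ref{charspeh}). The central character of $X$ is then the multiset concatenation $\bigsqcup_i\lambda_i$, and this equals $\chi_\mu$ for some $\mu\in DP(n)$ exactly when the parts of $\bigsqcup_i\lambda_i$ are pairwise distinct. Since all parts of a single $\lambda_i$ share the parity of $m_i+d_i-1$, distinctness within a common parity class is precisely the gap condition (\ref{gaps}), while distinctness across parities holds automatically; this forces both the $\pi_{ev}/\pi_{odd}$ split and the gap conditions.

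For sufficiency, suppose $X$ is as in (\ref{possible}) and set $\mu=\bigsqcup_i\lambda_i\sqcup\bigsqcup_j\lambda'_j\in DP(n)$. By Lemma \ref{prelimresults}(b), the genuine $\wti S_{k_i}$-type $\wti\sigma_{\lambda_i}^\pm$ (respectively $\wti\sigma_{\lambda'_j}^\pm$) appears in the Dirac cohomology of $a(m_i,d_i)$ (respectively of $a(m'_j,d'_j)$), so their outer product $\wti\sigma_M$ appears in $(\pi_{ev}\boxtimes\pi_{odd})\otimes\C S_M$. I then apply Lemma \ref{indlemma}(b) with $\wti\sigma=\wti\sigma_\mu^\pm$: the explicit type-$A$ description of $\Psi$ from \cite{C} gives $\Psi(\wti\sigma_\mu^\pm)=[(e_\mu,h_\mu,f_\mu)]$, matching the central character $\chi_\mu$ of $X$. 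The remaining hypothesis of Lemma \ref{indlemma}(b)---that $\wti\sigma_\mu^\pm\big|_{\wti W_M}$ contains $\wti\sigma_M$---is the classical branching rule for basic spin characters of $\wti S_n$ (Morris--Stembridge): $\wti\sigma_\mu^\pm$ occurs with positive multiplicity in the outer product $\wti\sigma_{\lambda_1}^\pm\cdot\wti\sigma_{\lambda_2}^\pm\cdots$ induced from the Young subgroup to $\wti S_n$ whenever $\mu$ is the distinct-part concatenation of the $\lambda$'s. Lemma \ref{indlemma}(b) then produces $\wti\sigma_\mu^\pm\hookrightarrow H^D(X)$, so $H^D(X)\neq 0$. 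The main obstacle is this branching step: tracking the $\pm$ labels (which decide whether $\wti\sigma_\mu^\pm$ is self-associate or splits into an associate pair) through iterated outer products, and invoking the exact multiplicity statement; everything else is combinatorial, reducing to how hook-shape partitions fit into a single $DP(n)$-partition together with Lemma \ref{prelimresults}(b) as the base case.
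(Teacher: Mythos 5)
Your proposal follows essentially the same route as the paper: necessity by matching the allowable central characters of Lemma \ref{ccdirac} against the classification in Theorem \ref{unitdual} (ruling out unitary twists and complementary series and turning distinctness of parts, separated by parity, into the gap conditions (\ref{gaps})), and sufficiency via Lemma \ref{prelimresults}(b), Lemma \ref{indlemma} and Proposition \ref{criterion}, together with the spin branching fact that $\wti\sigma_\lambda$ restricted to the Levi's pin cover contains the outer product of the $\wti\sigma_{\lambda^i}$. The only cosmetic difference is in excluding complementary series, where the paper treats it as immediate (half-integrality of the allowable coordinates already forbids a shift by $0<\nu<\tfrac 12$), which is cleaner than your block-pairing argument; otherwise the two proofs coincide.
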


\begin{proof} From Theorem \ref{unitdual}, a unitary irreducible
  module $X$ is induced from a combination of Speh modules and
  complementary series modules. It is immediate that in order for $X$
  to have one of the central characters from Lemma \ref{ccdirac}, a
  first restriction is that only Speh modules can appear in the
  induction, so $X$ is of the form (\ref{possible}). Notice then that
  the central character of $X$ is obtained by concatenating the
  central characters of $a(m_i,d_i).$ Therefore the central character
  of $X$ is $S_n$-conjugate to $h_\lambda$, where $\lambda$ is the
  composition $\lambda=\lambda^1\sqcup\dots\sqcup\lambda^\ell\sqcup\mu^1\sqcup\dots\sqcup\mu^t$, where $\lambda^i=(m_i+d_i-1,m_i+d_i-3,\dots, |m_i-d_i|+1)$, $1\le i\le \ell$ and $\mu^j=(m_j'+d_j'-1,m_j'+d_j'-3,\dots, |m_j'-d_j'|+1)$, $1\le j\le t$. The entries in the first type of strings are all even, while
the entries in the second type of strings are all odd. Since we need $\lambda$
to have no repetitions, condition (\ref{gaps}) follows.

{For the converse, assume $X$ is as in (\ref{possible}) and (\ref{gaps}). Then the
central character of $X$ is $h_\lambda$, where $\lambda$ is as
above. By Proposition \ref{criterion}, it remains to check that
$X\otimes S$ contains the $\wti S_n$-type $\wti \sigma_\lambda$. 
From Lemma \ref{indlemma}, we see that $\Hom_{\wti
  S_n}[\wti\sigma_\lambda,X\otimes S]=\frac{\dim\C S}{\dim \C
  S_M}\Hom_{\wti W_{M}}[\wti\sigma|_{\wti
  W_M},(\pi_{ev}\boxtimes\pi_{odd})|_{W_M}\otimes \C S_M],$ where
$\wti W_M=\wti S_{k_1}\cdot \dotsc\cdot \wti S_{k_\ell}\cdot \wti
S_{k_1'}\cdot\dotsc\cdot \wti S_{k'_t}$, and $\C S_M$ is the
corresponding spin module. (Here $\cdot$ denotes the graded version of
the direct product coming from the graded tensor product of Clifford
algebras as in Section \ref{s:2.6}.) From Lemma \ref{prelimresults},
we know that the $\wti S_{k_i}$-representation
$\wti\sigma_{\lambda^i}$ occurs in $a(m_i,d_i)|_{S_{k_i}}$ tensored with
the spin $\wti S_{k_i}$-module and similarly the $\wti
S_{k'_j}$-representation $\wti\sigma_{\mu^j}$ occurs in
$a(m'_j,d'_j)|_{S_{k'_j}}$ tensored with the spin $\wti
S_{k'_j}$-module. Therefore the tensor product representation
$\wti\sigma_{\lambda,M}:=\wti
\sigma_{\lambda^1}\boxtimes\dots\boxtimes\wti\sigma_{\lambda^\ell}\boxtimes
\wti \sigma_{\mu^1}\boxtimes\dots\boxtimes\wti\sigma_{\mu^t}$ occurs
in $(\pi_{ev}\boxtimes\pi_{odd})|_{W_M}\otimes \C S_M$. Finally, since
the composition $\lambda$ is just the concatenation of the
$(\lambda^i)$'s and the $(\mu^j)$'s, one sees that
$\wti\sigma_{\lambda,M}$ occurs with nonzero multiplicity in
$\wti\sigma_\lambda|_{\wti W_M}$. 

}

\end{proof}


\ifx\undefined\bysame
\newcommand{\bysame}{\leavevmode\hbox to3em{\hrulefill}\,}
\fi

\end{document}